\providecommand{\U}[1]{\protect\rule{.1in}{.1in}}
\newtheorem{theorem}{Theorem}
\newtheorem{lemma}[theorem]{Lemma}
\newtheorem{proposition}[theorem]{Proposition}
\newtheorem{corollary}[theorem]{Corollary}
\theoremstyle{definition}
\newtheorem{example}[theorem]{Example}
\theoremstyle{remark}
\newtheorem{remark}[theorem]{Remark}
\begin{document}

\title{Asymptotic variance of stationary reversible and normal Markov
processes}
\author{ George Deligiannidis \thanks{%
Department of Statistics, University of Oxford, OX1 3TG, UK; \texttt{%
deligian@stats.ox.ac.uk}} \and Magda Peligrad\thanks{
Department of Mathematical Sciences, University of Cincinnati, PO Box
210025, Cincinnati, Oh 45221-0025, USA. \texttt{peligrc@ucmail.uc.edu,
peligrm@ucmail.uc.edu}} \thanks{%
Supported in part by a Charles Phelps Taft Memorial Fund grant, and the NSF
grant DMS-1208237. } \and Sergey Utev \thanks{
Department of Mathematics, University of Leicester, University Road, LE1 7RH
UK, \texttt{\ su35@leicester.ac.uk} }}
\maketitle

\begin{abstract}
We obtain necessary and sufficient conditions for the regular variation of
the variance of partial sums of functionals of discrete and continuous-time
stationary Markov processes with normal transition operators. We also
construct a class of Metropolis-Hastings algorithms which satisfy a central
limit theorem and invariance principle when the variance is not linear in $n$%
. 
\end{abstract}

\section{Introduction}

Let $({\xi}_{n})_{n\in\mathbb{Z}}$ be a stationary Markov chain defined on a
probability space $(\Omega,\mathcal{F},\mathbb{P})$ with values in a general
state space $(S,\mathcal{A})$ and let the marginal distribution be denoted
by $\pi(A)=\mathbb{P}({\xi}_{0}\in A)$. We assume that there is a regular
conditional distribution denoted by $Q(x,A)=\mathbb{P}({\xi}_{1}\in A|\,{\xi 
}_{0}=x)$. Let $Q$ also denote the Markov transition operator acting via $%
(Qg)(x)=\int_{S}g(s)Q(x,\mathrm{d}s)$, on $\mathbb{L}_{0}^{2}(\pi)$, the set
of measurable functions on $S$ such that $\int_{S}g^{2}(s)\pi(\mathrm{d}%
s)<\infty$ and $\int_{S}g(s)\pi(\mathrm{d}s)=0.$ If $g, h\in \mathbb{L}
_{0}^{2}(\pi)$, the integral $\int_{S}g(s)h(s)\pi(\mathrm{d}s)$ will
sometimes be denoted by $\langle g,h\rangle$.

For some function $g \in \mathbb{L}_{0}^{2}(\pi)$, let 
\begin{equation}
X{_{i}=g(\xi}_{i}{),\quad S_{n}(}X{)=\sum\limits_{i=1}^{n}}X_{i},
\quad\sigma _{n}({g)}=(\mathbb{E}S_{n}^{2}(X{)})^{1/2}.   \label{defcsi}
\end{equation}
{\ Denote by $\mathcal{F}_{k}$ the $\sigma$--field generated by $\xi_{i}$
with $i\leq k$. }

For any integrable random variable $X$ we denote by $\mathbb{E}_{k}X=\mathbb{%
E}(X|\mathcal{F}_{k}).$ With this notation, $\mathbb{E}_{0}X_{1}=Qg(${$\xi$}$%
_{0})=\mathbb{E}(X_{1}|${$\xi$}$_{0}).$ We denote by ${{\|X\|}_{p}}$ the
norm in {$\mathbb{L}^{p}$}$(\Omega,\mathcal{F},\mathbb{P}).$

The Markov chain is called \textit{normal} when the transition operator $Q$
is  normal, that is it commutes with its adjoint $Q^{\ast},$ namely $%
QQ^{\ast}=Q^{\ast}Q$.

From the spectral theory of normal operators on Hilbert spaces (see for
instance \cite{R}), it is well known that for every $g\in L_{0}^{2}(\pi )$
there is a unique \emph{transition spectral measure} $\nu $ supported on the
spectrum of the operator $D:=\{z\in \mathbb{C}:|z|\leq 1\}$, such that 
\begin{equation}
\mathrm{cov}(X_{0},X_{n})=\mathrm{cov}((g({\xi }_{0}),Q^{n}g({\xi }%
_{0}))=\langle g,Q^{n}g\rangle =\int_{D}z^{n}\nu (\mathrm{d}z).  \label{cov}
\end{equation}%
and 
\begin{equation*}
\mathrm{cov}(\mathbb{E}_{0}(X_{i}),\mathbb{E}_{0}(X_{j}))=\langle
Q^{i}g,Q^{j}g\rangle =\langle g,Q^{i}(Q^{\ast })^{j}g\rangle =\int_{D}z^{i}%
\bar{z}^{j}\nu (\mathrm{d}z).
\end{equation*}

In particular, the Markov chain is reversible if $Q=Q^{\ast}.$ The condition
of reversibility is equivalent to requiring that $(${$\xi$}$_{0},${$\xi$}$%
_{1})$ and $({\xi}_{1},{\xi}_{0}) $ have the same distribution. Furthermore,
in the reversible case $\nu$ is concentrated on $[-1,1]$.

Limit theorems for additive functionals of reversible Markov chains have
received considerable attention in the literature not only for their
intrinsic interest, but also for their great array of applications which
range from interacting particle systems (see the seminal paper by Kipnis and
Varadhan \cite{KV}) and random walks in random environments (see for example 
\cite{Toth}), to the relatively recent applications in computational
statistics with the advent of Markov Chain Monte Carlo algorithms (e.g.\ 
\cite{HaggRos,RR}). Limit theorems have appeared under a great array of
conditions, notably geometric ergodicity (for an overview see \cite{KM}),
conditions on the growth of the conditional expectations $\mathbb{E}%
(S_{n}|X_{1})$ (see e.g.\ \cite{MW}, \cite{PU}), under spectral conditions
(see \cite{KV,GL,DL}) or under conditions on the resolvent of the transition
operator (see \cite{Toth}), a method which is also applicable in the
non-normal case where spectral calculus may not apply.

The variance of the partial sums plays a major role in limit theorems where
it acts as a normalizer, and also in computational statistics where the
asymptotic variance is used as a measure of the efficiency of an algorithm
(see e.g.\ \cite{Tierney}). It is not surprising then, that in certain
cases, conditions for the central limit theorem have been imposed directly
on the growth of the variance. In fact in 1986 Kipnis and Varadhan~\cite{KV}
proved the functional form of the central limit theorem for functionals of
stationary reversible ergodic Markov chains under the assumption that 
\begin{equation}
\lim_{n\rightarrow \infty }\frac{\mathrm{var}(S_{n})}{n}=\sigma _{g}^{2},
\label{condvar}
\end{equation}%
and further established necessary and sufficient conditions for the variance
of the partial sums to behave linearly in $n$ in terms of the transition
spectral measure $\nu $. In particular they showed that for any reversible
ergodic Markov chain the convergence in \eqref{condvar} is sufficient for
the functional central limit theorem $S_{[nt]}/\sqrt{n}\Rightarrow |\sigma
_{g}|W(t)$ (where $W(t)$ is the standard Brownian motion, $\Rightarrow $
denotes weak convergence and $[x]$ denotes the integer part of $x$).
Moreover, \eqref{condvar} is equivalent to the fact that the finite limiting
variance is then given by 
\begin{equation}
\sigma _{g}^{2}=\int\nolimits_{-1}^{1}\frac{1+t}{1-t}\nu (\mathrm{d}%
t)<\infty .  \label{SR}
\end{equation}%
Furthermore, according to Remark 4 on page 514 in \cite{DL} if in addition
to (\ref{SR}) we assume $\rho (-1)=0$\ then, we also have 
\begin{equation*}
\lim_{n\rightarrow \infty }{\sum\limits_{i=0}^{n}}\mathrm{cov}%
(X_{0},X_{i})=\int\nolimits_{-1}^{1}\frac{1}{1-t}\nu (\mathrm{d}t).
\end{equation*}%
See also \cite{HaggRos} for a discussion of when \eqref{condvar} and %
\eqref{SR} are equivalent.

It is remarkable that in the reversible case, conditions \eqref{condvar} and %
\eqref{SR} are equivalent, both sufficient for the central limit theorem and
invariance principle, and conjectured to be sufficient for the almost sure
conditional central limit theorem.%
It is an open problem, whether \eqref{SR} is also necessary for the
central limit theorem.

On the other hand, notice that any invertible transformation $T$ generates a
unitary, and thus normal, transition operator $Qf(x)=f(T(x))$, since $%
Q^{\ast }f(x)=f(T^{-1}(x))$ whence $QQ^{\ast }=Q^{\ast }Q=I$ is the identity
operator. In particular, any stationary sequence $\xi _{i}$, can be treated
as a functional of a normal Markov chain. Therefore for normal,
non-reversible Markov chains, \eqref{condvar} and the central limit theorem
and invariance principle are no longer equivalent without 
 further assumptions (see e.g. Bradley \cite{Bra} and Giraudo and Voln%
\'{y}~\cite{GiVo} for counterexamples).

For the non-reversible case, Gordin and Lif\v{s}ic \cite{GL} applied
martingale methods and stated, among other results, the central limit
theorem for functionals of stationary ergodic Markov chains with normal
transition operator, under the spectral condition 
\begin{equation}
\int_{D}\frac{1}{|1-z|}\nu (\mathrm{d}z)<\infty .  \label{eq:GLcond}
\end{equation}%
If condition (\ref{eq:GLcond}) holds then (\ref{condvar}) also holds with 
\begin{equation}
\sigma ^{2}:=\int_{D}\frac{1-|z|^{2}}{|1-z|^{2}}\nu (\mathrm{d}z)<\infty .
\label{sigma}
\end{equation}%
One of our main results, Theorem \ref{pr:NSC}, gives necessary and
sufficient conditions for the existence of the limit $\mathrm{var}%
(S_{n})/n\rightarrow K<\infty $. We shall see 
that $\mathrm{var}(S_{n})/n\rightarrow K$ if and only if $\sigma ^{2}<\infty 
$ and $\nu (U_{x})/x\rightarrow C$ as $x\rightarrow 0^{+}$, where 
\begin{equation*}
U_{x}=\{z=(1-r)e^{iu}\;:|z|\leq 1,\;0\leq r\leq |u|\leq x\}.
\end{equation*}%
In this case $K=\sigma ^{2}+{\pi C}$. Furthermore if (\ref%
{eq:GLcond}) holds then $C=0.$

Recently Zhao et al.~\cite{ZWV} and Longla et al.~\cite{LPP}, in the context
of reversible Markov chains, studied the asymptotic behavior of $S_{n}$ for
the more general case when the variance of partial sums behaves as a
regularly varying function $\sigma _{n}^{2}=\mathrm{var}(S_{n})=nh(n)$ where 
$h(x)$ is slowly varying, i.e. $h:(0,\infty )\rightarrow (0,\infty ),$
continuous, and $h(st)/h(t)\rightarrow 1$ for all $s>0$. For this case the
situation is different and in \cite{ZWV,LPP} examples are given of
stationary, reversible, and ergodic Markov chains that satisfy the CLT under
a normalization different of $\sigma _{n},$ namely $S_{n}/\sigma
_{n}\Rightarrow N(0,c^{2})$ for a $c\neq 1$ and $c\neq 0$.

On the other hand, in a recent paper, Deligiannidis and Utev \cite{DU} have
studied the relationship between the variance of the partial sums of weakly
stationary processes and the spectral measure induced by the unitary shift
operator. To be more precise, by the Birghoff-Herglotz Theorem (see e.g.\
Brockwell and Davis~\cite{BD}), there exists a unique measure on the unit
circle, or equivalently a non-decreasing function $F,$ called the \emph{%
spectral distribution function} on $[0,2\pi ]$, such that 
\begin{equation}
\mathrm{cov}(X_{0},X_{n})=\int_{0}^{2\pi }\mathrm{e}^{\mathrm{i}n\theta }F(%
\mathrm{d}\theta ),\quad \text{for all}\,\,n\in \mathbb{Z}\,.  \label{SpM}
\end{equation}%
If $F$ is absolutely continuous with respect to the normalized Lebesgue
measure $\lambda $ on $[0,2\pi ]$, then the Radon-Nikodym derivative $f$ of $%
F$ with respect to the Lebesgue measure is called the \textit{spectral
density;} in other words $F(\mathrm{d}\theta )=f(\theta )\mathrm{d}\theta $%
,. 
The main result of \cite{DU} is given below. In the sequel, the notation $%
a_{n}\sim b_{n}$ as $n\rightarrow \infty $ means that $\lim_{n\rightarrow
\infty }a_{n}/b_{n}=1.$

\vskip10pt \noindent\textbf{Theorem A}. [Deligiannidis and Utev \cite{DU}]%
\label{thm:DU} \textit{Let $S_{n}:=X_{1}+\cdots+X_{n}$ where $(X_{i})_{i\in%
\mathbb{Z}}$ is a {real} weakly stationary sequence. For $%
\alpha\in(0,2)$, define $C(\alpha):=\Gamma(1+\alpha)\sin(\tfrac{\alpha\pi}{2}%
)/[\pi(2-\alpha )]$, and let $h$ be slowly varying at infinity. Then $%
\mathrm{var}(S_{n})\sim n^{\alpha}h(n)$ as $n\rightarrow\infty$ if and only
if $F(x)\sim {\frac{1}{2}}C(\alpha)x^{2-\alpha}h(1/x)$ as $%
x\rightarrow0$. } \vskip10pt

In this paper we obtain necessary and sufficient conditions for the regular
variation of the variance of partial sums of functionals of stationary
Markov chains with normal operators. The necessary and sufficient conditions
are based on several different representations in terms of: 

\begin{enumerate}
\item the spectral distribution function in the sense of the
Birghoff-Herglotz theorem,

\item the transition spectral measure of the associated transition operator,

\item the harmonic measure of Brownian motion in the disk,

\item a martingale decomposition.
\end{enumerate}


In the case of stationary reversible Markov Chains we also construct a class
of Metropolis-Hastings algorithms with non-linear growth of variance, for
which we establish the invariance principle and conditional central limit
theorem with normalization $\sqrt{nh(n)}$. \vskip3pt \noindent \textit{%
Continuous-time processes.} In the continuous time setting, let $\{\xi
_{t}\}_{t\geq 0}$ be a stationary Markov process with values in the general
state space $(S,\mathcal{A})$, defined on a probability space $(\Omega ,%
\mathcal{F},\mathrm{P})$, with stationary measure $\pi $. 
We assume that the contraction semigroup 
\begin{equation*}
T_{t}g(x):=\mathbb{E}[g(\xi _{t})|\xi _{0}=x],\quad g\in L^{2}(\pi ),\quad
t\geq 0,
\end{equation*}%
is strongly continuous on $L^{2}(\pi )$, and we let $\{\mathcal{F}%
_{t}\}_{t\geq 0}$ be a filtration on $(\Omega ,\mathcal{F},\mathrm{P})$ with
respect to which $\{\xi _{t}\}_{t}$ is progressively measurable and
satisfies the Markov property $\mathrm{E}(g(\xi _{t})|\mathcal{F}%
_{u})=T_{t-u}g(\xi _{u})$, for any $g\in L^{2}(\pi )$ and $0\leq u<t$.
Furthermore we can write $T_{t}=\mathrm{e}^{Lt}$, where $L$ is the
infinitesimal generator of the process $\{\xi _{t}\}_{t}$, and $\mathcal{D}%
(L)$ its domain in $L^{2}(\pi )$. We assume $T_{t}$ to be normal, that is $%
T_{t}^{\ast }=T_{t}$, which then implies that $L$ is a normal, possibly
unbounded operator, with spectrum supported in the left half-plane $\{z\in 
\mathbb{C}:\Re (z)\leq 0\}$ (see \cite[Theorem~13.38]{R}). In the reversible
case the spectrum of $%
L$ is supported on the left real half-axis(see \cite[Remark~1.7]{KV}).

Similarly to the discrete case, with any $f\in L^2(\mu)$ we can associate a
unique spectral measure $\nu(\mathrm{d} z) = \nu_f (\mathrm{d} z)$ supported
on the spectrum of $L$ such that 
\begin{equation*}
\langle f, T_t f\rangle = \mathrm{cov}( f(\xi_0), f(\xi_t)) =
\int_{\Re(z)\leq 0} \mathrm{e} ^{z t} \nu (\mathrm{d} z).
\end{equation*}

In the reversible case Kipnis and Varadhan~\cite{KV} proved an invariance
principle under the condition that $f\in \mathcal{D}\big( (-L)^{-1/2}\big)$,
which in spectral form is equivalent to 
\begin{equation}
\int_{\alpha=-\infty}^{0}\frac{-1}{\alpha}\nu(\mathrm{d}\alpha)<\infty . 
\label{eq:1/a}
\end{equation}
Building on the techniques in \cite{GL,KV}, Holzmann~\cite{HH,HH2}
established the central limit theorem for processes with normal transition
semi-groups (see also \cite{O}), under the condition 
\begin{equation}
\int_{\Re(z)\leq0}|z|^{-1}\nu(\mathrm{d}z)<\infty.   \label{eq:1/z}
\end{equation}
In this case 
\begin{equation*}
\lim_{T\rightarrow\infty}\frac{\mathrm{var}(S_{T})}{T}= -2\int_{\mathbb{H}%
^{-}}\Re(1/z)\nu(\mathrm{d}z)=:\varsigma^{2}. 
\end{equation*}
On the other hand, using resolvent calculus, Toth~\cite{Toth2,Toth} treated
general discrete and continuous-time Markov processes and obtained a
martingale approximation, central limit theorem and convergence of
finite-dimensional distributions to those of Brownian motion, under
conditions on the resolvent operator which may hold even in the non-normal
case. Similar conditions, albeit in the normal case, also appeared later in 
\cite{HH,HH2}.

Under any of the above conditions, it is clear that the variance of $S_{T}$
is asymptotically linear in $T$. Similarly to the discrete case, we show in
Theorem~\ref{pr:NSCcts}, that $\mathrm{var}(S_n)/n \to K = \varsigma^2 + \pi
C$ if and only if $\varsigma^2<\infty$ and $\nu(U_x)/x\to C$, where 
\begin{equation*}
U_{x} = \{ a+ \mathrm{i} b: 0\leq-a\leq|b|\leq x\}. 
\end{equation*}

The rest of the paper is structured as follows. We provide our results for
discrete time processes in Section 2 and for continuous time in Section 3.
Section 4 contains the proofs, while the Appendix contains two standard
Tauberian theorems to make the text self-contained, and technical lemmas
used in Section 3.

\section{Results for Markov chains}

\subsection{Relation between the transition spectral measure and spectral
distribution function}

Our first result gives a representation of the spectral distribution
function in terms of the transition spectral measure. This link makes
possible to use the results in \cite{DU} to analyze the variance of partial
sums. Quite remarkably, if the transition spectral measure is supported on
the open unit disk, the spectral distribution function is absolutely
continuous with spectral density given by \eqref{eq:normaldensity}, and in
this case the sequence $\mathrm{cov}(X_{0},X_{n})$ converges to $0$.

\begin{lemma}[Representation Lemma]
\label{thm:normaldensity} Let $({\xi}_{n})_{n\in \mathbb{Z}}$ be a
stationary Markov chain, with normal transition operator $Q$. Let $g\in
L_{0}^{2}(\pi)$, $X_{i}:=g(\xi_{i})$ and write $\nu=\nu_{g}$ for the
operator spectral measure with respect to $g$. Also denote the unit circle $%
\Gamma:=\{z:|z|=1\}$ and by $D_{0}:=\{z:|z|<1\}.$ Denote by $\nu_{\Gamma}$
the restriction of measure $\nu$ to $\Gamma$ and by $\nu_{0}$ denote the
restriction of measure $\nu$ to $D_{0}.$ Then 
\begin{equation*}
\mathrm{cov}(X_{0},X_{n})=\int_{0}^{2\pi}\mathrm{e}^{\mathrm{i}tn}[\nu
_{\Gamma}(\mathrm{d}t)+f(t)\mathrm{d}t], 
\end{equation*}
where 
\begin{equation}
f(t)={\frac{1}{2\pi}}\int_{D_{0}}\frac{1-|z|^{2}}{|1-z\mathrm{e}^{\mathrm{i}%
t}|^{2}}\nu_{0}(\mathrm{d}z).   \label{eq:normaldensity}
\end{equation}
Furthermore the spectral distribution function has the representation 
\begin{equation}
F(\mathrm{d}t)=\nu_{\Gamma}(\mathrm{d}t)+f(t)\mathrm{d}t.   \label{SMR}
\end{equation}
\end{lemma}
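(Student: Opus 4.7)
The plan is to decompose the transition spectral measure $\nu$ into its restrictions $\nu_\Gamma$ on the unit circle and $\nu_0$ on the open disk, represent each piece as an integral against $\mathrm{e}^{\mathrm{i}nt}$ on $[0,2\pi)$, and then conclude by uniqueness in the Herglotz representation of the covariance sequence.

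I would start from \eqref{cov}, splitting $\mathrm{cov}(X_0,X_n) = \int_\Gamma z^n \nu_\Gamma(\mathrm{d}z) + \int_{D_0} z^n \nu_0(\mathrm{d}z)$. The boundary term is immediate: parameterizing $\Gamma$ by $t\in[0,2\pi)$ via $z=\mathrm{e}^{\mathrm{i}t}$ and pushing $\nu_\Gamma$ forward gives $\int_0^{2\pi} \mathrm{e}^{\mathrm{i}nt} \nu_\Gamma(\mathrm{d}t)$. For the interior term I would invoke the classical Poisson-kernel identity: for $z\in D_0$ and $n\geq 0$,
\begin{equation*}
z^n \;=\; \frac{1}{2\pi}\int_0^{2\pi} \mathrm{e}^{\mathrm{i}nt}\, \frac{1-|z|^2}{|\mathrm{e}^{\mathrm{i}t}-z|^2}\,\mathrm{d}t.
\end{equation*}
Since $|z^n|\leq 1$ and $\nu$ is a finite measure on $D$ of total mass $\|g\|_2^2$, Fubini applies and produces $\int_{D_0}z^n\nu_0(\mathrm{d}z)=\int_0^{2\pi} \mathrm{e}^{\mathrm{i}nt}\tilde f(t)\,\mathrm{d}t$, with $\tilde f(t)=\frac{1}{2\pi}\int_{D_0}\frac{1-|z|^2}{|\mathrm{e}^{\mathrm{i}t}-z|^2}\nu_0(\mathrm{d}z)$.

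The one step requiring real care is identifying $\tilde f$ with the $f$ given in \eqref{eq:normaldensity}, whose denominator is $|1-z\mathrm{e}^{\mathrm{i}t}|^2$ rather than $|\mathrm{e}^{\mathrm{i}t}-z|^2$. Here I would exploit the conjugation symmetry $\nu(\bar A)=\nu(A)$, which holds because $g$ is real-valued and $Q$ commutes with complex conjugation (it comes from a real Markov kernel), so the projection-valued spectral measure of $Q$ satisfies $E(\bar A)g=\overline{E(A)g}$. Changing variables $z\mapsto\bar z$ in the definition of $\tilde f$ together with the elementary identity $|\mathrm{e}^{\mathrm{i}t}-\bar z|=|1-z\mathrm{e}^{\mathrm{i}t}|$ then yields $\tilde f=f$. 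This is the only genuinely subtle step; everything else is bookkeeping.

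Assembling the two pieces gives $\mathrm{cov}(X_0,X_n)=\int_0^{2\pi} \mathrm{e}^{\mathrm{i}nt}[\nu_\Gamma(\mathrm{d}t)+f(t)\mathrm{d}t]$ for every $n\geq 0$. The candidate measure $\nu_\Gamma(\mathrm{d}t)+f(t)\mathrm{d}t$ is non-negative and finite, with total mass equal to $\nu(D)=\|g\|_2^2$ (using that the Poisson kernel integrates to $2\pi$), so uniqueness in the Herglotz representation identifies it with $F(\mathrm{d}t)$, giving both the covariance formula and \eqref{SMR}.
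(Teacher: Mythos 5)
Your proposal is correct and follows essentially the same route as the paper: split $\nu$ into $\nu_\Gamma$ and $\nu_0$, represent the interior contribution through the Poisson kernel (the paper expands the kernel as the geometric series $1+\sum_k(z^k\mathrm{e}^{\mathrm{i}tk}+\bar z^k\mathrm{e}^{-\mathrm{i}tk})$ and integrates term by term, while you invoke the harmonic-extension identity for $z^n$ plus Fubini), and conclude by uniqueness in the Herglotz representation. Your explicit appeal to the conjugation symmetry $\nu(\bar A)=\nu(A)$ to reconcile $|{\mathrm{e}^{\mathrm{i}t}-z}|$ with $|1-z\mathrm{e}^{\mathrm{i}t}|$ is a point the paper glosses over (it uses the same symmetry implicitly, cf.\ its remark that $\nu(\mathrm{d}z)=\nu(\mathrm{d}\bar z)$), so flagging it is a small improvement rather than a deviation.
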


\begin{remark}
By integrating relation (\ref{SMR}) we obtain%
\begin{equation}
F(x)=x\int_{D_{0}}T_{x}(z)\nu_{0}(\mathrm{d}z)+\nu_{\Gamma}([0,x]) 
\label{SpMe}
\end{equation}
where 
\begin{equation*}
T_{x}(z):=\frac{1}{2\pi}(1-|z|^{2})\int_{0}^{1}\frac{dt}{|1-z\mathrm{e}^{%
\mathrm{i}tx}|^{2}}\text{.}
\end{equation*}
\end{remark}

\noindent By combining Representation Lemma \ref{thm:normaldensity} with
Theorem A we obtain the following corollary.

\begin{corollary}
\label{cor5}Let $({\xi}_{n})_{n\in\mathbb{Z}}$ be as in Lemma \ref%
{thm:normaldensity} and let $\alpha\in(0,2).$ Then $\mathrm{var}%
(S_{n})=n^{\alpha}h(n)$ as $n\rightarrow\infty$ if and only if $F(x)=%
{\frac{1}{2}}{C(\alpha
)}x^{2-\alpha}h(1/x)$ as $x\rightarrow0^{+}.$
\end{corollary}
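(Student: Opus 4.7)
The plan is to derive Corollary~\ref{cor5} as an almost immediate consequence of the Representation Lemma~\ref{thm:normaldensity} combined with Theorem~A of Deligiannidis--Utev. The chain of reasoning has essentially two steps, and no genuine technical obstacle.

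First, I would observe that because $\pi$ is stationary for the Markov chain, the sequence $X_i = g(\xi_i)$ is a real weakly stationary process in $L^2(\mathbb{P})$ with mean zero. Theorem~A therefore applies to it, provided we can identify the Birghoff--Herglotz spectral distribution function of $(X_i)_{i\in\mathbb{Z}}$. The Representation Lemma~\ref{thm:normaldensity} supplies exactly this identification: it produces a finite positive measure on $[0,2\pi]$, namely $\nu_\Gamma(\mathrm{d}t) + f(t)\,\mathrm{d}t$, such that
\begin{equation*}
\mathrm{cov}(X_0, X_n) = \int_0^{2\pi} \mathrm{e}^{\mathrm{i} n t}\bigl[\nu_\Gamma(\mathrm{d} t) + f(t)\,\mathrm{d} t\bigr], \qquad n \in \mathbb{Z}.
\end{equation*}
By the uniqueness clause of the Birghoff--Herglotz representation, the spectral distribution function of $(X_i)$ must coincide with $F(\mathrm{d}t) = \nu_\Gamma(\mathrm{d}t) + f(t)\,\mathrm{d}t$, i.e.\ the object appearing in (\ref{SpMe}).

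Second, with $F$ so identified, I would invoke Theorem~A directly: for $\alpha \in (0,2)$ and $h$ slowly varying at infinity,
\begin{equation*}
\mathrm{var}(S_n) \sim n^{\alpha} h(n) \text{ as } n \to \infty \iff F(x) \sim \tfrac{1}{2}C(\alpha)\, x^{2-\alpha}\, h(1/x) \text{ as } x \to 0^+.
\end{equation*}
This is precisely the statement of the corollary (reading the ``$=$'' in the statement as the asymptotic ``$\sim$'' of Theorem~A, since $h$ is only determined up to asymptotic equivalence). The restriction $\alpha \in (0,2)$ in the corollary matches exactly the range covered by Theorem~A, and the constant $C(\alpha) = \Gamma(1+\alpha)\sin(\alpha\pi/2)/[\pi(2-\alpha)]$ is inherited from it.

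Since both halves of the proof reduce to quoting previously established results, there is no hard step to isolate. The only conceptual point worth emphasizing is that the entire role of the normality hypothesis on $Q$ is already absorbed into the Representation Lemma, which is what makes the transition from the operator-theoretic setting (transition spectral measure $\nu$ on $D$) to the weakly stationary setting (spectral distribution $F$ on $[0,2\pi]$) legitimate; once that bridge is crossed, Corollary~\ref{cor5} is a pure translation of Theorem~A into the Markov chain language.
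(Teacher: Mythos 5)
Your proposal is correct and follows exactly the route the paper takes: the paper derives Corollary~\ref{cor5} by combining the Representation Lemma~\ref{thm:normaldensity} (which identifies $F(\mathrm{d}t)=\nu_\Gamma(\mathrm{d}t)+f(t)\,\mathrm{d}t$ as the Birghoff--Herglotz spectral distribution) with Theorem~A, precisely as you do. Your added remark about uniqueness of the Herglotz representation is a sensible explicit justification of a step the paper leaves implicit.
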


It should be obvious from the statement of Representation Lemma \ref%
{thm:normaldensity} that Theorem A is directly applicable to the measure $%
\mathrm{d} F$. The conditions on $F,$ mentioned in Corollary \ref{cor5},
when expressed in terms of the operator spectral measure, become technical
conditions on the growth of integrals of the Poisson kernel over the unit
disk. To get further insight into this lemma we shall apply it to reversible
Markov chains.

Our next result, a corollary of Representation Lemma \ref{thm:normaldensity}
combined with Theorem A, provides this link and points out a set of
equivalent conditions for regular variation of the variance for reversible
Markov chains. In fact, as it turns out, if the spectral measure has no
atoms at $\pm1$, it follows that the spectral distribution function is
absolutely continuous and we obtain an expression for the spectral density.
Related ideas, under more restrictive assumptions have appeared in \cite{JB}%
, while in \cite{DL} a spectral density representation was obtained for
positive self-adjoint transition operators, in other words $\nu$ supported
on $[0,1)$.

\begin{corollary}
\label{prop:SpDens} Assume that $Q$ is self-adjoint and that the transition
spectral measure $\nu$ does not have atoms at $\pm1.$ Then, the spectral
distribution function $F$ defined by (\ref{SpM}) is absolutely continuous
with spectral density given by 
\begin{equation}
f(t)={\frac{1}{2\pi}}\int_{-1}^{1}\frac{1-\lambda^{2}}{1+\lambda^{2}-2%
\lambda\cos t}\mathrm{d}\nu(\lambda),   \label{eq:revdensity}
\end{equation}
and for $\alpha\in\lbrack1,2),$ the following are equivalent:

\begin{description}
\item[(i)] $\mathrm{var}(S_{n})=n^{\alpha}h(n)\text{ as }n\rightarrow\infty, 
$

\item[(ii)] $F(x)={\frac{1}{2}}{C(\alpha)}x^{2-\alpha}h(1/x)%
\text{ as }x\rightarrow0^{+}. $
\end{description}

Moreover, if $h(x)\rightarrow\infty$ as $x\rightarrow0^{+}$, then (i), (ii)
are equivalent to

$\displaystyle \int_{0}^{1}\frac{r(\mathrm{d}y)}{x^{2}+y^{2}}={\frac{\pi}{%
{2}} C(\alpha )}x^{1-\alpha}h(1/x)+O(1)\text{ as }
x\rightarrow0^{+}$; \thinspace where $r(0,y]=\nu(1-y,1)$.
\end{corollary}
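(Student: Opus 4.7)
The spectral density formula \eqref{eq:revdensity} is an immediate specialization of Representation Lemma~\ref{thm:normaldensity}: self-adjointness of $Q$ places the spectral measure $\nu$ on $[-1,1]$, and the assumption that $\nu$ has no atoms at $\pm 1$ (the only points of $\Gamma\cap[-1,1]$) makes the circular part $\nu_{\Gamma}$ vanish, so $F$ is absolutely continuous. Substituting $|1-\lambda\mathrm{e}^{\mathrm{i}t}|^{2} = 1 - 2\lambda\cos t + \lambda^{2}$ for real $\lambda$ into \eqref{eq:normaldensity} produces \eqref{eq:revdensity}. The equivalence (i) $\Leftrightarrow$ (ii) is then a direct application of Corollary~\ref{cor5}.

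For the equivalence of (ii) with the third condition, the plan is to derive an abelian-type identity
\begin{equation*}
\pi F(x) \;=\; x\int_{0}^{1} \frac{R(y)\,\mathrm{d}y}{x^{2}+y^{2}} \;+\; O(x),\qquad R(y):=r(0,y],
\end{equation*}
from which dividing by $x$ yields the third condition as equivalent to (ii); the divergence hypothesis $h(1/x)\to\infty$ ensures the principal term dominates the resulting $O(1)$ remainder. To obtain this identity, first integrate \eqref{eq:revdensity} in $t$ over $[0,x]$, using the half-angle substitution $u=\tan(t/2)$ to evaluate the inner integral exactly and arrive at
\begin{equation*}
\pi F(x) \;=\; \int_{-1}^{1} \arctan\!\Bigl(\tfrac{1+\lambda}{1-\lambda}\tan\tfrac{x}{2}\Bigr)\,\nu(\mathrm{d}\lambda).
\end{equation*}
The contribution of $\lambda\in[-1,0]$ is $O(x)$ since the arctan argument stays bounded by $\tan(x/2)$ there. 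On $\lambda\in[0,1)$, substitute $y=1-\lambda$ and compare $(2-y)\tan(x/2)/y$ with $x/y$; a careful estimate yields $\int_{0}^{1}\arctan((2-y)\tan(x/2)/y)\,r(\mathrm{d}y) = \int_{0}^{1} \arctan(x/y)\,r(\mathrm{d}y) + O(x)$, and a Riemann--Stieltjes integration by parts (valid because $R(0^{+})=0$ under the no-atom-at-$1$ hypothesis, and the boundary term at $y=1$ is $\arctan(x)R(1)=O(x)$) converts the latter into the displayed identity.

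The main technical obstacle lies in the error estimate on $\lambda\in[0,1)$, specifically in the regime $y\lesssim x$ where both arctan arguments are large. Here the naive Lipschitz bound $|\arctan(a)-\arctan(b)| \le |a-b|$ is too weak, and one must use the expansion $\arctan(a)-\arctan(b) = (b-a)/(ab) + O(\max(a^{-2},b^{-2}))$ to extract a pointwise error of order $y^{2}/x$, which then integrates against $r(\mathrm{d}y)$ on $(0,x)$ to $O(x^{1+\beta})$, $\beta=2-\alpha\in(0,1]$ --- comfortably within the targeted $O(x)$. In the complementary regime $y\gtrsim x$ the estimate is routine, since the Taylor expansion $(2-y)\tan(x/2)/y = x/y - x/2 + O(x^{3}/y)$ together with the Lipschitz bound on arctan gives a pointwise error $O(x)$.
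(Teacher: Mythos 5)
Your proof is correct and follows essentially the same route as the paper: the paper's own argument consists precisely of the identity $F(x)=O(x)+\tfrac{x}{\pi}\int_0^1 r(y)\,\mathrm{d}y/(x^2+y^2)$, asserted there only ``by standard analysis,'' which you derive explicitly via the arctangent formula for the integrated Poisson kernel and a Riemann--Stieltjes integration by parts. One small repair: in the regime $y\lesssim x$ you should not appeal to $r(0,x]$ being regularly varying of index $2-\alpha$ to get the $O(x^{1+\beta})$ bound --- that is part of what the corollary is establishing, and for $\alpha=1$ it can fail --- but the crude bound $\tfrac{1}{x}\int_0^x y^2\,r(\mathrm{d}y)\le x\,r(0,x]=O(x)$ already suffices there, so the argument stands.
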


\subsection{ Relation between spectral measure and planar Brownian motion}

Our next result makes essential use of the Poisson kernel which appears in %
\eqref{eq:normaldensity} to provide a fascinating interpretation of the
spectral distribution function $F$ in terms of the harmonic measure of
planar Brownian motion started at a random point in the open unit disk.

\begin{theorem}
\label{thm:harmonicmeasure} Let $\nu$ be the transition spectral measure,
and let $(B_{t}^{z})_{t\geq0}$ be standard planar Brownian motion in $%
\mathbb{C}$, started at the point $z\in D$. Also let $Z$ be a random point
in $D$ distributed according to $\nu$ and let $\tau_{D}^{Z}:=\inf\{t {\geq}
0:B_{t}^{Z}\notin D\}$. Let $\Gamma_{x}:= \{z: z=\mathrm{e}^{\mathrm{i}t y},
|y|<x\}$ and $\alpha\in(0,2).$ Then, the following statements are equivalent:

\begin{description}
\item[(i)] $\mathrm{var}(S_{n})\sim n^{\alpha}h(n)$ as $n\rightarrow \infty$;

\item[(ii)] $\mathrm{P}\big\{B_{\tau_{D}^{Z}}^{Z}\in\Gamma _{x}\big\}\sim
C(\alpha)x^{2-\alpha}h(1/x)/\nu(D)$ as $x\rightarrow0$.
\end{description}
\end{theorem}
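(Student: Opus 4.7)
My plan is to pass from the analytic content of the Representation Lemma and Corollary~\ref{cor5} to a probabilistic statement by recognizing the Poisson kernel in $f(t)$ from \eqref{eq:normaldensity} as the density of the harmonic (exit) measure of planar Brownian motion from the unit disk. More precisely, for any $z\in D_{0}$, the law of $B^{z}_{\tau_{D}^{z}}$ on $\Gamma$ is absolutely continuous with Poisson kernel density $\theta\mapsto \frac{1}{2\pi}\frac{1-|z|^{2}}{|\mathrm{e}^{\mathrm{i}\theta}-z|^{2}}$. Since $|1-z\mathrm{e}^{\mathrm{i}t}|=|\mathrm{e}^{-\mathrm{i}t}-z|$, this is exactly the integrand in $f(t)$, up to the reflection $t\mapsto -t$, which is immaterial because the arc $\Gamma_{x}$ is symmetric in $t$.

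The first substantive step is to express $F(\Gamma_{x})$ as an averaged harmonic measure. By Lemma~\ref{thm:normaldensity} we have $F(\mathrm{d}t)=\nu_{\Gamma}(\mathrm{d}t)+f(t)\,\mathrm{d}t$, and by Fubini on the $\nu_{0}$ piece together with the Poisson-kernel identification one obtains
$$\int_{\Gamma_{x}}f(t)\,\mathrm{d}t=\int_{D_{0}}\mathrm{P}\{B^{z}_{\tau_{D}^{z}}\in\Gamma_{x}\}\,\nu_{0}(\mathrm{d}z).$$
For $z\in\Gamma$ the Brownian motion starts on the boundary, so $\tau_{D}^{z}=0$ and $\mathrm{P}\{B^{z}_{\tau_{D}^{z}}\in\Gamma_{x}\}=\mathbf{1}_{\Gamma_{x}}(z)$, which accounts for the $\nu_{\Gamma}$ contribution. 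Conditioning on $Z\sim\nu/\nu(D)$ then gives the key identity
$$F(\Gamma_{x})=\int_{D}\mathrm{P}\{B^{z}_{\tau_{D}^{z}}\in\Gamma_{x}\}\,\nu(\mathrm{d}z)=\nu(D)\,\mathrm{P}\{B^{Z}_{\tau_{D}^{Z}}\in\Gamma_{x}\}.$$

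To close the loop I would use that $X_{n}$ is real to obtain the symmetry $F(\mathrm{d}\theta)=F(\mathrm{d}(2\pi-\theta))$ on $[0,2\pi]$, so that $F(\Gamma_{x})=2F(x)$ with $F(x):=F([0,x])$. Corollary~\ref{cor5} then yields the chain of equivalences: $\mathrm{var}(S_{n})\sim n^{\alpha}h(n)$ iff $F(x)\sim\tfrac{1}{2}C(\alpha)x^{2-\alpha}h(1/x)$ iff $F(\Gamma_{x})\sim C(\alpha)x^{2-\alpha}h(1/x)$ iff $\mathrm{P}\{B^{Z}_{\tau_{D}^{Z}}\in\Gamma_{x}\}\sim C(\alpha)x^{2-\alpha}h(1/x)/\nu(D)$. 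The main obstacle here is not really difficulty but bookkeeping: one has to match sign conventions in the Poisson kernel (resolved by the symmetry of $\Gamma_{x}$ under $t\mapsto -t$), handle possible atoms of $\nu$ on $\Gamma$ uniformly with the absolutely continuous contribution from $\nu_{0}$, and keep track of the doubling factor coming from the real-valued nature of the process. Once these are accounted for, the probabilistic statement in (ii) is essentially a translation of the analytic criterion in Corollary~\ref{cor5} via the Poisson-kernel representation of harmonic measure.
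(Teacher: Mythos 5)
Your proposal is correct and follows essentially the same route as the paper: identifying the Poisson kernel in the Representation Lemma as the harmonic-measure density, writing $F(\Gamma_x)=\nu(D)\,\mathrm{P}\{B^{Z}_{\tau_D^{Z}}\in\Gamma_x\}$ with the boundary atoms handled via regularity of $\partial D$, and then invoking Theorem A. Your explicit attention to the symmetry of $F$ and the factor of $2$ is a welcome piece of bookkeeping that the paper leaves implicit.
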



\subsection{Linear growth of variance for partial sums for normal Markov
Chains}

By applying martingale techniques we establish necessary and sufficient
conditions for the asymptotic linear variance behavior for general normal
Markov chains. 

\begin{theorem}
\label{pr:NSC} With the notation of Lemma~\ref{thm:normaldensity}

\begin{description}
\item[(a)] The limit, $\mathrm{var}(S_{n})/n\rightarrow K<\infty$ exists if
and only if 
\begin{align}
\sigma^{2}&:=\int_{D}\frac{1-|z|^{2}}{|1-z|^{2}}\nu(\mathrm{d}z)<\infty,
\qquad \text{and}  \label{normalsigma} \\
I_{n}&:=\frac{1}{n}\int_{D}\frac{|1-z^{n}|^{2}}{|1-z|^{2}}\nu(\mathrm{d}%
z)\rightarrow L,   \label{new_lin}
\end{align}
where $K=\sigma^{2}+L$ $.$

\item[(b)] Moreover, under (\ref{normalsigma}) the following are equivalent:

\begin{description}
\item[(i)] (\ref{new_lin}) holds with $L={\pi C}$\textit{.}

\item[(ii)] $\nu (U_{x})/x\rightarrow C$\textit{\ as }$x\rightarrow 0^{+}$%
\textit{, where }%
\begin{equation*}
U_{x}=\{z=(1-r)e^{\mathrm{i}u}\in D\;:\;0\leq r\leq |u|\leq x\}.
\end{equation*}

\item[(iii)] $n\nu (D_{n})\rightarrow C$\textit{\ as }$x\rightarrow 0^{+}$%
\textit{, where }%
\begin{equation*}
D_{n}=\{z=re^{2\mathrm{i}\pi \theta };\text{ }1-\frac{1}{n}\leq r\leq 1,%
\text{ }-\frac{1}{n}\leq \theta \leq \frac{1}{n}\}.
\end{equation*}
\end{description}
\end{description}
\end{theorem}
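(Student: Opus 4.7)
The plan is to derive a clean spectral formula for $\mathrm{var}(S_n)$, reduce (a) to a zero-limit statement via a key trigonometric identity, and then use a Tauberian argument for (b).

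For (a), I would start from $\mathrm{cov}(X_0,X_k) = \int_D z^k\,\nu(dz)$. Since the $X_i$ are real, $\nu$ is conjugation-symmetric, and stationarity yields $\mathrm{var}(S_n) = \int_D \phi_n(z)\,\nu(dz)$ with $\phi_n(z) := n + 2\sum_{k=1}^{n-1}(n-k)z^k$. A short telescoping gives, for $z\neq 1$,
\[
\phi_n(z) \;=\; n\,\frac{1+z}{1-z} \;-\; \frac{2z(1-z^n)}{(1-z)^2},
\]
so using $\mathrm{Re}((1+z)/(1-z)) = (1-|z|^2)/|1-z|^2$ we arrive at
\[
\frac{\mathrm{var}(S_n)}{n} \;=\; \sigma^2 \;-\; \frac{2}{n}\int_D \mathrm{Re}\!\left(\frac{z(1-z^n)}{(1-z)^2}\right)\nu(dz).
\]
This already forces $\sigma^2<\infty$ whenever $\mathrm{var}(S_n)/n$ converges (a Fatou-type argument applied to the non-negative Poisson kernel gives $\liminf \mathrm{var}(S_n)/n \ge \sigma^2$). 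The second crucial ingredient is the identity
\[
G_n(z) \;:=\; \tfrac{|1-z^n|^2}{|1-z|^2} + 2\,\mathrm{Re}\!\left(\tfrac{z(1-z^n)}{(1-z)^2}\right) \;\equiv\; 0\quad\text{on }|z|=1,
\]
checked directly from $1-e^{i\theta}=-2ie^{i\theta/2}\sin(\theta/2)$, which makes both summands equal to $\pm\sin^2(n\theta/2)/\sin^2(\theta/2)$. Combining the two displays,
\[
\frac{\mathrm{var}(S_n)}{n} - \sigma^2 \;=\; I_n - \tfrac{1}{n}\int_{D_0}G_n(z)\,\nu_0(dz),
\]
so (a) reduces to the single claim that $\tfrac{1}{n}\int_{D_0}G_n\,\nu_0 \to 0$ under $\sigma^2<\infty$.

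This last claim is the \emph{main obstacle} of (a). Although $G_n/n\to 0$ pointwise on $D_0$, the completed-square form $G_n = |(1-z^n)/(1-z) + \bar z/(1-\bar z)|^2 - |z/(1-z)|^2$ shows that $|G_n|$ can grow like $1/|1-z|^2$ near $z=1$, which need not be $\nu_0$-integrable under the bare assumption $\sigma^2<\infty$. The plan is to split $D_0$ into a bulk part $\{|z|\le 1-\delta\}$, where the trivial bound $|G_n|/n = O(1/(n\delta^2))$ handles the easy contribution, and a thin boundary strip $\{1-\delta<|z|<1\}$, where Representation Lemma~\ref{thm:normaldensity} is used to rewrite the $\nu_0$-integral as one against the Poisson density $f(\theta)$ localised near $\theta=0$. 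On the image side one obtains a bound of the form $C\sigma^2\delta + o_\delta(1)$ by dominated convergence; letting first $n\to\infty$ and then $\delta\to 0$ closes (a).

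For (b), the equivalence (ii)$\Leftrightarrow$(iii) is purely geometric: the triangular region $U_x$ and the rectangle $D_n$ are comparable when $x\asymp 1/n$, with the ratio of $\nu$-measures traced by direct area comparison. For (iii)$\Leftrightarrow$(i), write $nI_n = \int_D F_n\,\nu$ with the Fej\'er-type kernel $F_n(z):=|1-z^n|^2/|1-z|^2$, and decompose $D = D_n \cup (D\setminus D_n)$. On $D_n$ we have $F_n\asymp n^2$, contributing $\approx n\cdot n\nu(D_n)$; on $D\setminus D_n$ the bound $F_n\le 4/|1-z|^2$ together with the Tauberian theorems of the Appendix controls the remainder. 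The precise constant $\pi$ in $L=\pi C$ is produced by the normalisation $\int_{-\pi}^{\pi}F_n(\theta)\,d\theta = 2\pi n$ on the unit circle. Assembling (a), (ii)$\Leftrightarrow$(iii), and (iii)$\Leftrightarrow$(i) yields $K = \sigma^2+\pi C$, completing the argument.
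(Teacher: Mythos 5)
Your spectral identity for part (a) is correct — one does have $\mathrm{var}(S_n)/n=\sigma^2+I_n-\tfrac1n\int_{D_0}G_n\,\nu_0(\mathrm{d}z)$ with your $G_n$ vanishing on $\Gamma$ — but the step you yourself flag as the main obstacle, $\tfrac1n\int_{D_0}G_n\,\nu_0\to 0$ under \eqref{normalsigma} alone, is exactly where the proposal breaks down, and the plan you sketch for it does not work. Representation Lemma~\ref{thm:normaldensity} converts integrals of \emph{pure powers} $z^k$ into integrals against $f(\theta)\,\mathrm{d}\theta$; it cannot be applied to $G_n$, whose leading term $|1-z^n|^2/|1-z|^2=\sum_{j,k=0}^{n-1}z^j\bar z^k$ contains genuinely mixed monomials $z^j\bar z^k$ that are not captured by the spectral distribution function. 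On the boundary strip $\{1-\delta<|z|<1\}$ you produce no dominating function: the negative part of $G_n$ is of size $|z|^2/|1-z|^2$, which is not $\nu_0$-integrable under \eqref{normalsigma} (mass at $(1-\varepsilon)e^{\mathrm{i}u}$ with $\varepsilon\ll u^2$ makes $\sigma^2$ small but $1/|1-z|^2$ huge), so neither dominated convergence nor a bound of the form $C\sigma^2\delta$ is available. The paper sidesteps this entirely with the orthogonal martingale decomposition $S_n=\mathbb{E}_0(S_n)+\sum_{i=1}^n\bigl(\mathbb{E}_i(S_n-S_{i-1})-\mathbb{E}_{i-1}(S_n-S_{i-1})\bigr)$, which splits $\mathrm{var}(S_n)$ into two \emph{nonnegative} pieces: $n\int_{D_0}\delta_n(z)\tfrac{1-|z|^2}{|1-z|^2}\nu(\mathrm{d}z)$ with $\delta_n\le 4$ (handled by Fatou for necessity of $\sigma^2<\infty$ and by dominated convergence for the limit $\sigma^2$), and $\mathbb{E}(\mathbb{E}_0 S_n)^2=nI_n+O(1)$, the $O(1)$ coming from finiteness of $\mu(\mathrm{d}z)=\tfrac{1-|z|}{|1-z|^2}\nu(\mathrm{d}z)$. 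You either need to adopt that decomposition or supply a genuinely new estimate for $\int_{D_0}G_n\,\nu_0$; as written the cancellation between the positive and negative parts of $G_n$ near $z=1$ is asserted, not proved.

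Part (b) has two further gaps. The equivalence (ii)$\Leftrightarrow$(iii) is \emph{not} purely geometric: the discrepancy region $W_x=\{(1-r)e^{\mathrm{i}u}:0\le|u|\le r\le x\}$ between the triangle $U_x$ and the rectangle can carry arbitrary mass for a general finite measure $\nu$, and no ``area comparison'' controls it; the paper kills it precisely by using $\tfrac{1-|z|}{|1-z|^2}\ge\tfrac1{2x}$ on $W_x$ together with finiteness of $\mu$, i.e.\ hypothesis \eqref{normalsigma} is essential here. Second, the claim $F_n\asymp n^2$ on $D_n$ is false: $|1-z^n|^2$ vanishes whenever $\mathrm{Arg}(z)$ is a nonzero multiple of $2\pi/n$, so $F_n$ oscillates between $0$ and order $n^2$ on $D_n$ and the heuristic ``main term $\approx n\cdot n\nu(D_n)$'' does not produce the limit $L=\pi C$. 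The actual content of (i)$\Leftrightarrow$(ii) is the multi-step reduction of $I_n$ to the one-dimensional Fej\'er integral $\int_0^{\pi}\tfrac{\sin^2(nx/2)}{n\sin^2(x/2)}\,\mathrm{d}G(x)$ with $G(x)=\nu(U_x)$ (each error term controlled by the finite measure $\mu$), followed by an application of Theorem~A; the Karamata and monotone-density theorems in the Appendix, which concern Laplace transforms, are not the right Tauberian tools for this kernel.
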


\vskip10pt

It should be noted that there are many sufficient conditions for the
convergence 
\begin{equation}
\mathrm{var}(S_{n})/n\rightarrow\sigma^{2}<\infty.   \label{popular}
\end{equation}

\noindent(1) It is immediate from the proof of Theorem \ref{pr:NSC}, that (%
\ref{popular}) is equivalent to $\sigma^{2}<\infty$ and 
\begin{equation}
\frac{1}{n}\int_{D}\frac{|1-z^{n}|^{2}}{|1-z|^{2}}\nu(\mathrm{d}%
z)\rightarrow0.   \label{cl_linear:LG}
\end{equation}
\noindent(2) In Corollary 7.1 in \cite{BI} it was shown that if we assume 
\begin{equation*}
\int_{D}\frac{1}{|1-z|}\nu(\mathrm{d}z)<\infty, 
\end{equation*}
then both (\ref{normalsigma}) and (\ref{cl_linear:LG}) are satisfied and so
convergence (\ref{popular}) holds, the result attributed to Gordin and Lif%
\v{s}ic \cite{GL} (see Theorem 7.1 in \cite{BI}; see also \cite{DL}).

\noindent(3) From Representation Lemma \ref{thm:normaldensity} and Ibragimov
version of Hardy-Littlewood theorem 
\begin{equation*}
\mathrm{var}(S_{n})/n\rightarrow2\pi f(0)=\sigma^{2}. 
\end{equation*}

\noindent(4) On the other hand, from the Representation Lemma \ref%
{thm:normaldensity} and Theorem A, convergence (\ref{popular}) is equivalent
to the uniform integrability of $T_{x}(z)$ with respect to $\nu _{0}$ as $%
x\rightarrow0^{+}$.

\noindent(5) Motivated by the complex Darboux-Wiener-Tauberian approach
(e.g. as in \cite{DU11}), by analyzing 
\begin{equation*}
V(\lambda)=\sum_{n=1}^{\infty}\mathrm{var}(S_{n})\lambda^{n}\;=\frac{\lambda 
}{(1-\lambda)^{2}}\int_{D}\frac{1+\lambda z}{1-\lambda z}\nu(dz), 
\end{equation*}
a sufficient condition for (\ref{popular}) is 
\begin{equation*}
\int_{D}\frac{1+\lambda z}{1-\lambda z}\nu(dz)\rightarrow\sigma^{2}=\int _{D}%
\frac{1+z}{1-z}\nu(dz)\text{ as }\lambda\rightarrow1\text{ with }%
|\lambda|<1. 
\end{equation*}
Here, since $\nu(\mathrm{d}z)=\nu(\mathrm{d}\bar{z})$, the integral is
understood in the Cauchy sense i.e. 
\begin{equation*}
\int_{D}f(z)\nu(dz)=\frac{1}{2}\int_{D}[f(z)+f(\bar{z})]\nu(dz). 
\end{equation*}

\noindent (6) From Theorem \ref{pr:NSC}, it follows that \eqref{popular} is
equivalent to $\sigma ^{2}<\infty $ and $\nu (U_{x})/x\rightarrow 0$ as $%
x\rightarrow 0^{+}$.

\noindent(7) Finally, again from Theorem \ref{pr:NSC}, it follows that %
\eqref{popular} is equivalent to $\sigma^{2}<\infty$ and $%
n\nu(D_{n})\rightarrow0$ as $n\rightarrow\infty$. This result is also a
corollary of the following inequality motivated by Cuny and Lin~\cite{CL} 
\begin{equation*}
\frac{n\nu(D_{n})}{36}\leq\frac{1}{n}\mathbb{E}(\mathbb{E}%
_{1}(S_{n}))^{2}\leq\frac{4}{n}{\displaystyle\sum\limits_{j=1}^{n-1}}%
j\nu(D_{j}). 
\end{equation*}

\begin{remark}
Notice that when the transition spectral measure $\nu$ is concentrated on $%
\Gamma$ (dynamical system), then $\sigma^{2}=0$ and so $\sigma^{2}$ cannot
be the limiting variance, in general.
\end{remark}

By inspecting the proof of the Theorem \ref{pr:NSC}, under %
\eqref{normalsigma} we are able to characterize regular variation of $%
\mathrm{var}(S_{n})$ when $\liminf_{n\rightarrow \infty }\mathrm{var}%
(S_{n})/n>0$. More exactly, we have the following proposition.

\begin{proposition}
Assume $\liminf_{n\rightarrow \infty }\mathrm{var}(S_{n})/n>0$ and that %
\eqref{normalsigma} holds. Then, for $\alpha \in \lbrack 1,2)$, and a
positive function $h$, slowly varying at infinity, $\mathrm{var}(S_{n})\sim
n^{\alpha }h(n)$ as $n\rightarrow \infty $ if and only if $\nu (U_{x})\sim
C(\alpha )x^{2-\alpha }h(1/x)$ as $x\rightarrow 0^{+}$, with $C(\alpha )$ as
defined in Theorem~\textsl{A}. In particular, $\mathrm{var}(S_{n})/n$ is
slowly varying at $n\rightarrow \infty $ if and only if $\nu (U_{x})/x$ is
slowly varying as $x\rightarrow 0^{+}$.
\end{proposition}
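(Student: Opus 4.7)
The plan is to promote the convergence statements of Theorem~\ref{pr:NSC} to regularly varying asymptotics. The starting point is the identity
\begin{equation*}
\mathrm{var}(S_n)/n = \sigma^2 + I_n + o(1)\qquad \text{as } n\to\infty,
\end{equation*}
which, as one can extract from the proof of Theorem~\ref{pr:NSC}(a), holds whenever $\sigma^2<\infty$; this is the quantitative content underlying the relation $K=\sigma^2+L$ in that theorem.

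In the regime $\alpha>1$, or $\alpha=1$ with $h(n)\to\infty$, the linear contribution $n\sigma^2$ is negligible compared to $n^\alpha h(n)$, so that $\mathrm{var}(S_n)\sim n^\alpha h(n)$ is equivalent to $nI_n\sim n^\alpha h(n)$, i.e.\ $I_n\sim n^{\alpha-1}h(n)$. The case $\alpha=1$ with $h$ asymptotically bounded, combined with the hypothesis $\liminf \mathrm{var}(S_n)/n>0$, reduces to Theorem~\ref{pr:NSC}(b) itself; this is precisely the content of the ``in particular'' clause: $\mathrm{var}(S_n)/n$ is slowly varying exactly when $\nu(U_x)/x$ is, modulo the additive constant $\sigma^2$ that does not affect slow variation.

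The core step is then the Tauberian equivalence
\begin{equation*}
I_n \sim \pi C(\alpha)\, n^{\alpha-1} h(n) \quad\Longleftrightarrow\quad \nu(U_x)\sim C(\alpha)\, x^{2-\alpha}h(1/x),
\end{equation*}
which is the regularly varying refinement of the (i)$\Leftrightarrow$(ii) equivalence in Theorem~\ref{pr:NSC}(b). The plan here is to write
\begin{equation*}
n I_n = \int_D \frac{|1-z^n|^2}{|1-z|^2}\nu(\mathrm{d}z),
\end{equation*}
parametrise $z = (1-r)\mathrm{e}^{\mathrm{i} u}$, and apply Fubini to recast this as an integral of $\nu(U_\cdot)$ against a kernel that, as $n\to\infty$, is concentrated on scale $1/n$ near $z=1$. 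A Karamata-type Tauberian theorem of the form collected in the Appendix then transfers regular variation from $\nu(U_x)$ to $nI_n$ and vice versa, with the correct leading constant.

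The chief obstacle is this kernel calculation. One must show both (a) that the contribution to $nI_n$ from the complement of $U_{c/n}$ is asymptotically of lower order than $n^\alpha h(n)$ for a suitable $c$, and (b) that the leading-order contribution of $U_{1/n}$ carries precisely the factor $\pi C(\alpha)$, so as to match the constant $C(\alpha)$ inherited from Theorem~A on the $\nu$-side. Once the kernel is pinned down to this precision, standard manipulations with slowly varying functions close the argument.
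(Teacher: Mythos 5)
Your skeleton---write $\mathrm{var}(S_n)/n=\sigma^2+\tfrac1n\mathbb{E}(\mathbb{E}_0(S_n))^2+o(1)$, discard the linear term when $n^{\alpha-1}h(n)\to\infty$, and transfer regular variation between the remaining term and $\nu(U_\cdot)$ by a Tauberian step---is exactly the route the paper intends (it offers no separate proof, only the instruction to ``inspect the proof of Theorem~\ref{pr:NSC}''), and your treatment of the $\alpha=1$ boundary case and the ``in particular'' clause is fine. The genuine problem is the constant in your core step, and it is not cosmetic. The proof of Theorem~\ref{pr:NSC} already reduces $I_n=\tfrac1n\int_D|1-z^n|^2|1-z|^{-2}\nu(\mathrm{d}z)$, up to error terms that are additive $o(1)$ or multiplicative $(1+o(1))$ (the hypothesis $\liminf_n\mathrm{var}(S_n)/n>0$ is what makes the additive $o(1)$'s negligible against $n^{\alpha-1}h(n)$), to the Fej\'er transform
\begin{equation*}
\int_0^{\pi}\frac{\sin^2(n\theta/2)}{n\sin^2(\theta/2)}\,G(\mathrm{d}\theta),\qquad G(x)=\nu(U_x),
\end{equation*}
and $n$ times this is precisely the transform governed by Theorem~A, with $G$ in the role of the two-sided spectral distribution $F(x)+F(-x)$ (which is why the factor $\tfrac12$ disappears). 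The correct equivalence is therefore $I_n\sim n^{\alpha-1}h(n)$ iff $\nu(U_x)\sim C(\alpha)x^{2-\alpha}h(1/x)$: the transfer constant is $1/C(\alpha)$, which depends on $\alpha$ and equals $\pi$ only at $\alpha=1$. Your version, $I_n\sim\pi C(\alpha)n^{\alpha-1}h(n)$, extrapolates the $\alpha=1$ constant from Theorem~\ref{pr:NSC}(b) to all $\alpha$; chased through, it yields $\mathrm{var}(S_n)\sim\pi C(\alpha)n^{\alpha}h(n)$, contradicting the Proposition whenever $\pi C(\alpha)\neq1$ (e.g.\ $\pi C(3/2)=\Gamma(5/2)\sin(3\pi/4)/(1/2)\approx1.88$). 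The point you are missing is that the Fej\'er kernel has tails of order $1/(n\theta^2)$, so for $\alpha>1$ a non-vanishing fraction of $nI_n$ comes from scales much larger than $1/n$; this is exactly why the constant is the $\Gamma$-and-sine expression $C(\alpha)$ and not the naive ``$\pi$ times the mass of $U_{1/n}$'' heuristic.

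A secondary mismatch: the Tauberian theorems collected in the Appendix (Theorems~\ref{t1} and~\ref{thm:tm}) concern Laplace transforms and monotone densities and do not apply to the oscillatory Fej\'er kernel, so invoking them after your Fubini step would not close the argument; redoing the kernel analysis to the precision you describe in (a) and (b) would amount to reproving Theorem~A. The economical fix is to reuse verbatim the chain $I_n=I_n'+\Delta_n'=\cdots=I_n^{(5)}+o(1)$ from the proof of Theorem~\ref{pr:NSC}, observe that all the $\Delta$-terms remain negligible in the regularly varying regime under your standing hypotheses, and then quote Theorem~A for the measure $\mathrm{d}G$. With that substitution, and with $\pi C(\alpha)$ replaced by $1$ in your core equivalence, your argument is complete.
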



\subsection{Relation between the variance of partial sums and transition
spectral measure of reversible Markov chains}

We continue the study of stationary reversible Markov chains and provide
further necessary and sufficient conditions for its variance to be regularly
varying, in terms of the operator spectral measure by a direct approach,
without the link with the spectral distribution function.

\begin{theorem}
\label{thR(x)}Assume $Q$ is self-adjoint, $\alpha \geq 1$, $\mathrm{var}%
(S_{n})/n\rightarrow \infty $, and let $c_{\alpha }:=\alpha (2-\alpha
)/2\Gamma (3-\alpha )$. Then 
\begin{equation*}
V(x)=\int\nolimits_{-1}^{1-x}\frac{1}{1-t}\nu (\mathrm{d}t)\sim c_{\alpha
}x^{1-\alpha }h(\frac{1}{x})\text{ as }x\rightarrow 0^{+}
\end{equation*}%
if and only if 
\begin{equation*}
\mathrm{var}(S_{n})=n^{\alpha }h(n)\text{ as }n\rightarrow \infty .
\end{equation*}%
Furthermore if $\alpha >1$ then $\mathrm{var}(S_{n})=n^{\alpha }h(n)$ as $%
n\rightarrow \infty $ iff 
\begin{equation*}
\nu (1-x,1]\sim d_{\alpha }x^{2-\alpha }h(\frac{1}{x})\text{ as }%
x\rightarrow 0^{+},
\end{equation*}%
where $d_{\alpha }:=\alpha (\alpha -1)/2\Gamma (3-\alpha )$.
\end{theorem}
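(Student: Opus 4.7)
The plan is to derive an exact integral representation of $\mathrm{var}(S_n)$ in terms of $V$ via Stieltjes integration by parts, and then exploit regular variation techniques for both directions. In the reversible case,
\[
\mathrm{var}(S_n) = \int_{-1}^{1} K_n(t)\,\nu(\mathrm{d}t),\qquad K_n(t) = \frac{n(1-t^2) - 2t(1-t^n)}{(1-t)^2}.
\]
Changing variables $s = 1-t$ and pushing $\nu$ forward to a measure $\tilde\nu$ on $[0,2]$, the defining relation $V(x) = \int_x^2 s^{-1}\tilde\nu(\mathrm{d}s)$ gives $\tilde\nu(\mathrm{d}s) = -s\,\mathrm{d}V(s)$ as Stieltjes measures. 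An integration by parts (the boundary terms vanishing because $V(2)=0$ and $xV(x)\to 0$, which is automatic for $V$ regularly varying of index $1-\alpha>-1$) then yields
\[
\mathrm{var}(S_n) = \int_0^2 V(s)\,\Psi_n(s)\,\mathrm{d}s,\qquad \Psi_n(s) = \frac{2 - ns^2 - 2(1+ns)(1-s)^n}{s^2}.
\]

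A direct rearrangement reveals the scaling decomposition $\Psi_n(s) = n^2\,g_n(ns) - n$ with $g_n(u) := 2[1-(1+u)(1-u/n)^n]/u^2 \to \tilde\psi'(u) := 2[1-(1+u)\mathrm{e}^{-u}]/u^2$ uniformly on compacts. Substituting $u = ns$ and absorbing the $O(1)$ remainder $\int_0^2 V\,\mathrm{d}s$ (finite, hence negligible since $\mathrm{var}(S_n)/n\to\infty$) produces
\[
\frac{\mathrm{var}(S_n)}{n} = \int_0^{2n} V(u/n)\,g_n(u)\,\mathrm{d}u + O(1).
\]
For the forward direction, assume $V(x)\sim c_\alpha x^{1-\alpha}h(1/x)$. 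The Uniform Convergence Theorem for regularly varying functions combined with Potter's bounds justifies dominated convergence, and the limit integral $\int_0^\infty u^{1-\alpha}\tilde\psi'(u)\,\mathrm{d}u$ is evaluated by a further integration by parts (using $[1-(1+u)\mathrm{e}^{-u}]' = u\mathrm{e}^{-u}$) to $2\,\Gamma(2-\alpha)/\alpha$. The identity $\Gamma(3-\alpha)=(2-\alpha)\Gamma(2-\alpha)$ gives the prefactor $c_\alpha\cdot 2\Gamma(2-\alpha)/\alpha = 1$, so $\mathrm{var}(S_n)\sim n^\alpha h(n)$.

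The converse direction is Tauberian: the above representation presents $\mathrm{var}(S_n)/n$ as a multiplicative (Mellin) convolution of the monotone function $V$ against the density $\tilde\psi'$, so regular variation of $\mathrm{var}(S_n)/n$ of index $\alpha-1$, combined with monotonicity of $V$, forces the same regular variation on $V(1/n)$ with matching constant. I expect this to be the main technical obstacle, because $\tilde\psi'$ is not of pure exponential form and one cannot directly cite Karamata's Laplace Tauberian theorem; instead one must either reduce to a Laplace setting or apply a general monotone Tauberian theorem, likely the one collected in the Appendix.

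For the second equivalence (valid when $\alpha > 1$), set $R(x) := \nu(1-x,1] = \tilde\nu([0,x])$, so that $V(x) = \int_x^2 R(\mathrm{d}s)/s$. Stieltjes integration by parts gives the identity $R(x) = \int_0^x V(s)\,\mathrm{d}s - xV(x)$. Assuming $V(x)\sim c_\alpha x^{1-\alpha}h(1/x)$, Karamata's direct theorem yields $\int_0^x V \sim \frac{c_\alpha}{2-\alpha}x^{2-\alpha}h(1/x)$, so
\[
R(x) \sim c_\alpha\Bigl(\frac{1}{2-\alpha}-1\Bigr)x^{2-\alpha}h(1/x) = \frac{c_\alpha(\alpha-1)}{2-\alpha}\,x^{2-\alpha}h(1/x) = d_\alpha\,x^{2-\alpha}h(1/x).
\]
The reverse implication follows analogously from the companion identity $V(x) = R(2)/2 - R(x)/x + \int_x^2 R(s)/s^2\,\mathrm{d}s$ and Karamata's theorem applied to the tail integral.
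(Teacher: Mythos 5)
Your route is genuinely different from the paper's. You write $\mathrm{var}(S_n)$ as an exact Mellin-type convolution of $V$ against the explicit kernel $\Psi_n(s)=\frac{d}{ds}\bigl[sK_n(1-s)\bigr]$, whereas the paper first discards the negative part of the spectrum (via the pairing $b_{2l}+b_{2l+1}$, which contributes only $O(n)$), reduces $\mathrm{var}(S_n)$ to the Ces\`aro sums of the increasing sequence $C_1(k)=\int_0^1\frac{1-x^k}{1-x}\nu_1(\mathrm{d}x)$ by the monotone density theorem, and then converts $C_1(n)$ into a genuine Laplace transform of a monotone function so that Karamata's Tauberian theorem applies \emph{in both directions}. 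Your kernel identity, the scaling $\Psi_n(s)=n^2g_n(ns)-n$, the evaluation $\int_0^\infty u^{1-\alpha}\tilde\psi'(u)\,\mathrm{d}u=2\Gamma(2-\alpha)/\alpha$ (so that $c_\alpha$ comes out right), and the entire $\alpha>1$ equivalence between $V$ and $R=\nu(1-x,1]$ via the two integration-by-parts identities all check out; the implication ``$V$ regularly varying $\Rightarrow$ $\mathrm{var}(S_n)\sim n^\alpha h(n)$'' is essentially complete, and your treatment of the second equivalence is arguably cleaner than the paper's second Laplace-transform argument.

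The gap is the converse of the first equivalence, which is half of an ``if and only if'' statement. You assert that regular variation of $\mathrm{var}(S_n)/n$ plus monotonicity of $V$ ``forces'' $V(1/n)$ to be regularly varying with matching constant, but neither appendix theorem delivers this: Theorem~\ref{t1} requires an exponential (Laplace) kernel, and Theorem~\ref{thm:tm} requires the regularly varying quantity to be presented as an integral of an ultimately monotone density, which $\mathrm{var}(S_n)/n=\int_0^{2n}V(u/n)g_n(u)\,\mathrm{d}u+O(1)$ is not. For a general positive kernel, a Mellin convolution with a monotone input does \emph{not} automatically admit a Tauberian converse; one needs a Mercerian result of Drasin--Shea type (with a check that the Mellin transform of $\tilde\psi'$ does not vanish on the relevant strip), or a reduction to Laplace form. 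The latter is available here --- writing $\tilde\psi'(u)=2u^{-2}\int_0^u v\mathrm{e}^{-v}\,\mathrm{d}v$ and applying Fubini gives $\mathrm{var}(S_n)/n\approx 2n\int_0^\infty s\mathrm{e}^{-ns}\Phi(s)\,\mathrm{d}s$ with $\Phi(s)=\int_s^2 w^{-2}V(w)\,\mathrm{d}w$, and one can verify that $s\Phi(s)$ is monotone since $\Phi(s)\le V(s)/s$, after which Karamata's theorem and the monotone density theorem recover $V$ --- but this is real work that your proposal explicitly defers. As written, the direction ``$\mathrm{var}(S_n)\sim n^\alpha h(n)\Rightarrow V(x)\sim c_\alpha x^{1-\alpha}h(1/x)$'' is not proved.
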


\begin{remark}
It should be obvious from the statement in the above theorem, that regular
variation of the variance is equivalent to regular variation of the
transition spectral measure only in the case $\alpha>1$. As the following
example demonstrates, in the case $\alpha=1$, there are reversible Markov
chains whose variance of partial sums varies regularly with exponent $1$
even though $\nu(1-x,1]$ is not a regularly varying function.
\end{remark}

\begin{example}
Take a probability measure $\upsilon$ on $[-1,1]$ defined for $0<a<1/2$ by 
\begin{equation}
\mathrm{d}\upsilon=\frac{1}{c}(1-|x|) \Big(1+a\sin[\ln(1-|x|)]+a\cos[%
\ln(1-|x|)]\Big)\mathrm{d}x.   \label{def-niu}
\end{equation}
where $c$ is the normalizing constant. Then, the unique invariant measure is 
\begin{equation*}
\mathrm{d}\pi=\frac{\mathrm{d}\upsilon}{\theta(1-|x|)}=\frac{1}{2} \Big(1+a%
\sin[\ln(1-|x|)]+a\cos [\ln(1-|x|)]\Big)\mathrm{d}x. 
\end{equation*}
We first compute the following integral 
\begin{align*}
\int_{0}^{1-x}\frac{\mathrm{d}\pi}{1-y} & =\frac{1}{2}\int_{x}^{1}\frac{1}{t}%
\Big(1+a\sin[\ln(t)]+a\cos[\ln(t)]\Big)\mathrm{d}x \\
& =-\frac{1}{2}\ln x+O(1)\text{ as }x\rightarrow0,
\end{align*}
whence, by Theorem~\ref{thR(x)}%
\begin{equation*}
\lim_{n\rightarrow\infty}\frac{\mathrm{var}(S_{n})}{n\log n}=c. 
\end{equation*}
However, the covariances are not regularly varying because the spectral
measure is not. To see why it is enough to show that $r(x)$ is not regularly
varying at $0$. Indeed, if we take $y_{k}=e^{-2\pi k}\rightarrow0^{+},$ and $%
y_{k}=e^{\pi/2-2\pi k}\rightarrow0^{+},$ then $r(y_{k})=y_{k}$ and $%
r(y_{k})/y_{k}\rightarrow1.$ However, for the choice $r(z_{k})=z_{k}(1+2%
\alpha),$ we have $r(z_{k})/z_{k}\rightarrow1+2\alpha,$ and hence the
spectral measure is not regularly varying.
\end{example}

\begin{remark}
Often in the literature, conditions for the linear growth of the variance
are given in terms of the covariances (see for example \cite{HaggRos}). As
it turns out, one can construct positive covariance sequences such that $%
\sum_{k=0}^{n}\mathrm{cov}(X_{0},X_{k})=h(n)$ is slowly varying, and hence
the variance is regularly varying, but $a_{n}=\mathrm{cov}(X_{0},X_{n})>0,$
is not slowly varying. To construct such a chain, suppose that $\varepsilon
_{n}$ is an oscillating positive sequence such that $\varepsilon
_{n}\rightarrow 0$ and $\sum_{k}a_{k}=\infty $ where $a_{k}:=\varepsilon
_{k}/k$. Then $g_{n}=\sum_{k=1}^{n}a_{k}$ is slowly varying since 
\begin{equation*}
g_{[bn]}=g_{n}+\sum_{i=n+1}^{[bn]}\frac{\varepsilon _{i}}{i}=g_{n}+O\big(%
\varepsilon _{n+1}\log b\big).
\end{equation*}%
So, a priori we have many situations when $\mathrm{var}(S_{n})=nh(n)$ even
though the covariances (and hence the operator spectral measure) are not
regularly varying.
\end{remark}

\vskip10pt The above proof was direct in the sense, that it relied only on
the use of classical Tauberian theory without linking the transition
spectral measure with the spectral distribution function, and thus without
invoking the results of \cite{DU}. %
%
%

\subsection{Examples of limit theorems with non-linear normalizer}

As an application we construct a class of stationary irreducible and
aperiodic Markov Chains, based on the Metropolis-Hastings algorithm, with $%
\mathrm{var}(S_{n})\sim n h(n)$. Markov chains of this type are often
studied in the literature from different points of view, in Doukhan et al. 
\cite{Douk}, Rio (\cite{Rio1} and \cite{Rio2}), Merlev\`{e}de and Peligrad 
\cite{mp}, Zhao et al. \cite{ZWV}) and Longla et al. \cite{LPP}.

Let $E=\{|x|\leq1\}$ and define the transition kernel of a Markov chain by 
\begin{equation*}
Q(x,A)=|x|\delta_{x}(A)+(1-|x|)\upsilon(A), 
\end{equation*}
where $\delta_{x}$ denotes the Dirac measure and $\upsilon$ is a symmetric
probability measure on $[-1,1]$ in the sense that for any $A\subset
\lbrack0,1]$ we have $\upsilon(A)=\upsilon(-A).$ We shall assume that 
\begin{equation}  \label{eq:theta}
\theta=\int_{-1}^{1}\frac{1}{(1-|x|)}\upsilon(\mathrm{d}x)<\infty.
\end{equation}
We mention that $Q$ is a stationary transition function with the invariant
distribution 
\begin{equation*}
\mu(\mathrm{d}x)=\frac{1}{\theta(1-|x|)}\upsilon(\mathrm{d}x). 
\end{equation*}
Then, the stationary Markov chain $(\xi_{i})_{i}$ with values in $E$ and
transition probability $Q(x,A)$ and with marginal distribution $\mu$ is
reversible and positively recurrent. Moreover, for any odd function $g$ we
have $Q^{k}(g)(x)=|x|^{k}g(x)$ and therefore 
\begin{equation}
Q^{k}(g)(\xi_{0})=\mathbb{E}(g(\xi_{k})|\xi_{0})=|\xi_{0}|^{k}g(\xi_{0})%
\text{ a.s.}   \label{operator}
\end{equation}
For the odd function $g(x)=\mathrm{sgn}(x)$, define $X_{i}:=\mathrm{sgn}%
(\xi_{i})$. Then 
for any positive integer $k$ 
\begin{equation*}
\langle g,Q^{k}(g)\rangle={\int\nolimits_{-1}^{1}}|x|^{k}\mu(\mathrm{d}x)=2{%
\int\nolimits_{0}^{1}}x^{k}\mu(\mathrm{d}x), 
\end{equation*}
and so on $[0,1]$,\ $2\mu$ coincides with the transition spectral measure $%
\nu,$ associated to $Q$ and $g$. Furthermore, the operator $Q$ is of
positive type $\nu\lbrack-1,0)=0$. In other words 
\begin{equation*}
\nu= \left\{ 
\begin{array}{c}
2\mu\text{ on }[0,1] \\ 
0\text{ on }[-1,0)%
\end{array}
\right. . 
\end{equation*}
Therefore, by Theorem~\ref{thR(x)} applied with $\alpha=1$, $\mathrm{var}%
(S_{n})=nh(n)$ with $h(n)\rightarrow\infty$ slowly varying at $%
n\rightarrow\infty$ if and only if 
\begin{equation*}
V(x)=\int_{0}^{1-x}\frac{\nu(\mathrm{d}y)}{1-y} =\int_{0}^{1-x}\frac {2\mu(%
\mathrm{d}y)}{1-y}\sim\frac{1}{2} h\big(\frac{1}{x}\big), 
\end{equation*}
is slowly varying as $x\rightarrow0^{+}$.

Our next result presents a large class of transition spectral measures for
the model above which leads to functional central limit theorem.

\begin{theorem}
\label{thm:MH} Let $V(x)$ be slowly varying as $x\rightarrow0^{+}$. Then,
the central limit theorem, the functional central limit theorem and the
conditional central limit theorem hold for partial sums of $X_{i}$ defined
above.
\end{theorem}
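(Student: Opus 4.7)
My plan is to exploit the explicit regenerative structure of this Metropolis--Hastings chain, which reduces the problem to a classical central limit theorem for i.i.d.\ symmetric random variables lying in the domain of attraction of the normal law. The first step is to invoke Theorem~\ref{thR(x)} with $\alpha=1$: the slow variation of $V(x)$ as $x\to 0^+$ translates into $\mathrm{var}(S_n)\sim nh(n)$ for some $h$ slowly varying at infinity. The case in which $V$ is bounded (equivalently $h$ bounded) reduces to the Kipnis--Varadhan setting and is already covered; so I may assume $h(n)\to\infty$, which fixes the normalizer $\sigma_n=\sqrt{nh(n)}\to\infty$.

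Next I decompose the chain along its jump times. Set $\tau_0=0$ and $\tau_{k+1}=\inf\{i>\tau_k:\xi_i\neq\xi_{i-1}\}$, and write $L_k=\tau_k-\tau_{k-1}$ for the block lengths and $\epsilon_k=\mathrm{sgn}(\xi_{\tau_{k-1}+1})$ for the block signs. From the form of $Q$ and the symmetry of $\upsilon$, the pairs $(\epsilon_k,L_k)_{k\ge 1}$ are i.i.d., with $\mathbb{P}(L_k\ge\ell\mid Y_k=y)=|y|^{\ell-1}$ for $Y_k\sim\upsilon$, and $\epsilon_k$ is a fair Rademacher variable independent of $L_k$. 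Since $X_i$ is constant on each block, one obtains
$$S_n=\sum_{k=1}^{N_n}\epsilon_k L_k+R_n,$$
where $N_n$ counts the blocks completed in $[1,n]$ and $R_n$ collects the two partial boundary blocks. The summands $Y_k:=\epsilon_k L_k$ are i.i.d.\ symmetric, with truncated variance $b^2(x):=\mathbb{E}(Y_1^2\mathbf{1}_{|Y_1|\le x})=\mathbb{E}(L_1^2\mathbf{1}_{L_1\le x})$. A Karamata-type computation starting from $\mathbb{P}(L_1\ge\ell)=\int|y|^{\ell-1}\upsilon(\mathrm{d}y)$ and using $\mathrm{d}\upsilon=\theta(1-|x|)\,\mathrm{d}\mu$ together with $\nu=2\mu$ on $[0,1]$ shows that $b^2(x)$ is slowly varying at infinity, proportional to $V(1/x)$. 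Hence $Y_1$ lies in the domain of attraction of the normal law and Feller's classical CLT gives $\sum_{k=1}^m Y_k/\sqrt{m\,b^2(m)}\Rightarrow N(0,1)$.

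The marginal CLT for $S_n$ then follows by combining this with the renewal law of large numbers $N_n/n\to 1/\theta$ a.s.\ (and Anscombe's theorem to swap deterministic and random indices), and with the fact that $|R_n|$ is stochastically dominated by two a.s.\ finite random variables---the size-biased block straddling the origin and the block containing time $n$---whose laws do not depend on $n$ and are therefore $o_P(\sigma_n)$. The limiting variance constant is pinned down by matching with $\mathrm{var}(S_n)/\sigma_n^2\to 1$. The functional CLT is obtained by upgrading $(Y_k)$ to its Donsker invariance principle under the domain-of-attraction condition and composing with the uniformly convergent random time change $t\mapsto N_{\lfloor nt\rfloor}/n\to t/\theta$. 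For the conditional CLT, note that given $\mathcal{F}_0$ the first jump time $\tau_1$ is geometric with parameter $1-|\xi_0|$ and hence almost surely finite, while past $\tau_1$ the chain evolves independently of $\mathcal{F}_0$; thus the conditional limit law coincides with the unconditional one.

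The main technical obstacle I anticipate is the Karamata step identifying $b^2(x)$ with a constant multiple of $V(1/x)$, which requires careful bookkeeping among $\upsilon$, $\mu$ and $\nu$ in the regime where $|y|\uparrow 1$ governs both the block-length tail and the spectral mass near $1$. A secondary subtlety is the control of the size-biased straddling block at the origin, whose tail is heavier than that of a generic block; nevertheless its distribution does not depend on $n$, so it is $o_P(\sigma_n)$ as soon as $\sigma_n\to\infty$, which is precisely the regime in which the non-linear normalizer is needed.
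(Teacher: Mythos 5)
Your proposal follows essentially the same route as the paper: decompose the chain at its jump times into i.i.d.\ blocks $Y_k=\tau_k X_{\tau_k}$, show that the truncated second moment $H(u)=\mathbb{E}[\tau_1^2 I(\tau_1\le u)]$ is slowly varying and asymptotic to $2\theta V(1/u)$ (this is exactly the paper's Lemma~\ref{lem:aux}), conclude that $Y_1$ lies in the domain of attraction of the normal law, apply Feller's CLT, and transfer back to $S_n$ by controlling the boundary blocks and the discrepancy between the random and deterministic number of completed blocks. Your identification of the block signs as Rademacher variables independent of the block lengths (by symmetry of $\upsilon$) is correct and matches the paper's use of the i.i.d.\ pairs $(\xi_{\tau_k},\tau_k)$.

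There is, however, one genuine error: the normalization. For i.i.d.\ symmetric summands with slowly varying truncated variance $b^2(x)$, Feller's theorem gives $\sum_{k=1}^m Y_k/a_m\Rightarrow N(0,1)$ with $a_m$ defined \emph{self-referentially} by $m\,b^2(a_m)/a_m^2\to 1$; this is not $\sqrt{m\,b^2(m)}$ in general, because $a_m$ grows like $\sqrt{m}$ times a slowly varying factor and a slowly varying function need not satisfy $b^2(\sqrt{m})\sim b^2(m)$ (take $b^2(x)=\exp(\sqrt{\ln x})$). For the same reason your closing step --- ``the limiting variance constant is pinned down by matching with $\mathrm{var}(S_n)/\sigma_n^2\to 1$'' --- fails: the correct normalizer $b_{[n/\theta]}$, with $b_m^2\sim mH(b_m)$, need not be asymptotically equivalent to $\sqrt{\mathrm{var}(S_n)}\sim\sqrt{2nV(1/n)}$. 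The example immediately following Theorem~\ref{thm:MH} in the paper exhibits exactly this phenomenon: for $V(x)=\exp(\sqrt{\ln(1/x)})$ one finds $b_n^2/\mathrm{var}(S_n)\rightarrow 0$ and $S_n/\sqrt{\mathrm{var}(S_n)}\rightarrow^{P}0$, so the CLT holds only under the self-normalizing sequence and is degenerate under the standard deviation. If you replace $\sqrt{m\,b^2(m)}$ by the solution of $mH(b_m)\sim b_m^2$ and drop the variance-matching step, your argument coincides with the paper's proof.
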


%
\vskip10pt Next, we give a particular example of a Metropolis-Hastings
algorithm in which a non-degenerate central limit theorem holds under a
certain normalization. However, when normalized by the standard deviation we
have degenerate limiting distribution. \bigskip 

\noindent \textbf{Example.} For $0<x<1$, we take the slowly varying
function\ $V(x)=\exp (\sqrt{\ln (1/x)})$. By Theorem \ref{thR(x)}, as $%
n\rightarrow \infty $ 
\begin{equation*}
\mathrm{var}(S_{n})=2nV(1/n)(1+o(1)).
\end{equation*}%
%
On the other hand, let us choose $b_{n}$ such that $nE[\tau _{1}^{2}I(\tau
_{1}\leq b_{n})]/b_{n}^{2}\sim 1$ as $n\rightarrow \infty $. By Lemma~\ref%
{lem:aux} it follows that $2\theta nV(1/b_{n})\sim b_{n}^{2}$, with $\theta $
as defined in Eq.~\eqref{eq:theta}. Note now that 
\begin{align*}
2\theta nV(1/b_{n})& =2\theta n\exp (\sqrt{\ln (b_{n})})=2\theta n\exp (%
\sqrt{(1/2)\ln (4nV(1/b_{n})}) \\
& =2\theta n\exp (o(1)+\sqrt{(1/2)\ln n}+(1/2)^{3/2}),
\end{align*}%
which implies that $b_{n}^{2}\sim 2n\theta \exp (\sqrt{(1/2)\ln n})$, giving
the following CLT: 
\begin{equation*}
S_{n}/[2n\exp (\sqrt{(1/2)\ln n})^{1/2}\Rightarrow N(0,1).
\end{equation*}%
However 
\begin{equation*}
\frac{b_{n}^{2}}{nV(1/n)}\rightarrow 0\quad \text{and therefore\ }\frac{S_{n}%
}{\sqrt{\mathrm{var}(S_{n})}}\rightarrow ^{P}0.
\end{equation*}


\section{Continuous-time Markov processes}

Suppose we have a stationary Markov process $\{\xi_{t}\}_{t\geq0}$, with
values in the general state space $(S, \mathcal{A})$, and for $g \in
L_{0}^{2}(\pi)$ let $T_{t} g(x) := \mathbb{E} [ g(\xi_{t})| \xi_{0}=x]$.
Further $T_{t} = \mathrm{e}^{Lt}$, where $L$ is the infinitesimal generator
which we assume to be normal, which then implies that its spectrum is
supported on $\{z\in\mathbb{C}: \Re(z) \leq0\}$, such that 
\begin{equation*}
\mathrm{cov}(f(\xi_{t}), f(\xi_{0})) = \int_{\Re(z)\leq0} \mathrm{e}^{z t}
\nu(\mathrm{d} z). 
\end{equation*}
Finally define $S_{T}(g):= \int_{s=0}^{T} g(\xi_{s}) \mathrm{d} s$.

The following result is a continuous time analogue of Theorem~\ref%
{thm:harmonicmeasure}, linking the spectral distribution function with the
harmonic measure of planar Brownian motion.

\begin{theorem}
\label{pr:harmoniccts} 
Let $\{\xi_t\}_t$ be a stationary Markov process with normal generator $L$,
invariant measure $\pi$, and let $g\in L^2(\pi)$ and $\nu=\nu_f$ be the
transition spectral measure associated with $L$ and $g$. Write $%
(B_{t}^{z})_{t\geq0}$ for a standard planar Brownian motion in $\mathbb{C}$,
started at the point $z\in\mathbb{H}^{-}:= \{z\in\mathbb{C}: \Re(z)\leq 0\}$%
. Also let $Z$ be a random point in $\mathbb{H}^{-}$ distributed according
to $\nu$ and let $\tau_{\mathbb{H}^{-}}^{Z}:=\inf\{t {\geq} 0:B_{t}^{Z}\notin%
\mathbb{H}^{-}\}$. For $\alpha\in(0,2)$, the following statements are
equivalent:

\begin{description}
\item[\textit{(i)}] $\mathrm{var}(S_{T}(g))\sim T^{\alpha}h(T)$ as $%
n\rightarrow \infty$;

\item[\textit{(ii)}] $\mathrm{P}\big\{B_{\tau _{D}^{Z}}^{Z}\in (-\mathrm{i}x,%
\mathrm{i}x)\big\}\sim C(\alpha )x^{2-\alpha }h(1/x)/\nu (\mathbb{H}^{-})$
as $x\rightarrow 0^{+}$.
\end{description}
\end{theorem}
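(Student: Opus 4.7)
The plan is to mirror the proof of the discrete-time Theorem~\ref{thm:harmonicmeasure}, replacing the Poisson kernel of the unit disk with that of the left half-plane $\mathbb{H}^{-}$. First I would decompose $\nu=\nu_{0}+\nu_{\partial}$ into its restrictions to $\{\Re(z)<0\}$ and to the imaginary axis, and use the Fourier-inversion identity (valid for $\Re(z)<0$ and $t\geq 0$)
\begin{equation*}
\mathrm{e}^{zt}=\int_{\mathbb{R}}\mathrm{e}^{\mathrm{i}\lambda t}\,\frac{1}{\pi}\,\frac{|\Re(z)|}{(\Re(z))^{2}+(\lambda-\Im(z))^{2}}\,\mathrm{d}\lambda
\end{equation*}
inside the spectral representation $\mathrm{cov}(g(\xi_{0}),g(\xi_{t}))=\int_{\mathbb{H}^{-}}\mathrm{e}^{zt}\nu(\mathrm{d}z)$. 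Fubini's theorem then yields a continuous-time analogue of the Representation Lemma~\ref{thm:normaldensity}: the spectral distribution $F$ on $\mathbb{R}$ of $\{g(\xi_{t})\}$ (granted by Bochner's theorem) decomposes as $F(\mathrm{d}\lambda)=f(\lambda)\,\mathrm{d}\lambda+\hat{\nu}_{\partial}(\mathrm{d}\lambda)$, where $\hat{\nu}_{\partial}$ is the pushforward of $\nu_{\partial}$ under $z\mapsto\Im(z)$ and
\begin{equation*}
f(\lambda)=\frac{1}{\pi}\int_{\{\Re(z)<0\}}\frac{|\Re(z)|}{(\Re(z))^{2}+(\lambda-\Im(z))^{2}}\,\nu_{0}(\mathrm{d}z).
\end{equation*}

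Next I would recognize the integrand above as the density, on the imaginary axis, of the harmonic measure of $\mathbb{H}^{-}$ based at $z$; consequently $f$ is precisely the density of the harmonic exit distribution of Brownian motion started from $Z\sim\nu_{0}$. A point $z$ in the support of $\nu_{\partial}$ already lies on $\partial\mathbb{H}^{-}$, so $\tau_{\mathbb{H}^{-}}^{z}=0$ and $B_{0}^{z}=z$ trivially. Averaging over $Z$ with law $\nu/\nu(\mathbb{H}^{-})$ yields the clean identity
\begin{equation*}
\mathrm{P}\bigl\{B_{\tau_{\mathbb{H}^{-}}^{Z}}^{Z}\in(-\mathrm{i}x,\mathrm{i}x)\bigr\}=\frac{F((-x,x))}{\nu(\mathbb{H}^{-})},
\end{equation*}
which converts (ii) into a statement purely about the spectral distribution.

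The equivalence with (i) now reduces to the continuous-time Karamata-type statement
\begin{equation*}
\mathrm{var}(S_{T}(g))\sim T^{\alpha}h(T)\quad\Longleftrightarrow\quad F((-x,x))\sim C(\alpha)\,x^{2-\alpha}h(1/x),
\end{equation*}
the continuous-time analogue of Theorem~A. I would prove it by starting from the Parseval identity $\mathrm{var}(S_{T}(g))=\int_{\mathbb{R}}\frac{4\sin^{2}(\lambda T/2)}{\lambda^{2}}F(\mathrm{d}\lambda)$ and invoking the Abelian/Tauberian machinery already used in Theorem~A and recorded in the Appendix. I expect this final step to be the main technical obstacle, because the Fej\'er-type kernel $4\sin^{2}(\lambda T/2)/\lambda^{2}$ lives on the whole of $\mathbb{R}$ rather than on the torus, so one has to verify that the contribution of $F$ away from $\lambda=0$ is negligible compared with $T^{\alpha}h(T)$, something that is automatic in the discrete/compact setting of Theorem~A. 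Once this Tauberian equivalence is in hand, the theorem follows immediately from the harmonic-measure identity derived in the previous paragraph.
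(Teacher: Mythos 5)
Your proposal follows essentially the same route as the paper: the Fourier-inversion identity $\mathrm{e}^{zt}=\frac{1}{\pi}\int_{\mathbb{R}}\mathrm{e}^{\mathrm{i}\lambda t}\frac{-\Re(z)}{\Re(z)^{2}+(\lambda-\Im(z))^{2}}\,\mathrm{d}\lambda$, Fubini to identify the spectral distribution with the harmonic (exit) measure of the left half-plane averaged over $Z\sim\nu$, and then the Tauberian equivalence between regular variation of $\mathrm{var}(S_{T})$ and of $F$ near the origin. The one step you flag as the "main technical obstacle" --- passing from the Fej\'er-type kernel $4\sin^{2}(\lambda T/2)/\lambda^{2}$ on $\mathbb{R}$ to the setting of Theorem~A --- is indeed the only point the paper's terse proof leaves implicit, and it is handled exactly as you suggest (truncation away from $\lambda=0$ costs only $O(1)$) in the paper's proof of Theorem~\ref{pr:NSCcts}.
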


The following theorem gives a necessary and sufficient condition in terms of
the transition spectral measure $\nu$. Define for $x>0$, 
\begin{equation*}
U_{x} := \{ a+ \mathrm{i} b: 0\leq-a\leq|b|\leq x\}. 
\end{equation*}
and let 
\begin{equation*}
\varsigma^{2}:= -2\int_{\mathbb{H}^{-}}\Re(1/z)\nu(\mathrm{d}z).
\end{equation*}

\begin{theorem}
\label{pr:NSCcts}With the notation of Theorem~\ref{pr:harmoniccts} the
following are equivalent:

\begin{description}
\item[(i)] $\mathrm{var}(S_{T}(g))/T \to L = \varsigma^{2} +K$, where $K>0$;

\item[(ii)] $\varsigma ^{2}<\infty $ and $\nu (U_{x})/x\rightarrow K/\pi $
as $x\rightarrow 0^{+}$.
\end{description}

In addition, if $\varsigma^2<\infty$ and $\liminf_{T\to\infty}\mathrm{var}%
(S_{T})/T = \infty$, then $\mathrm{var}(S_{T})\sim T^{\alpha}h(T)$, for $%
\alpha\geq1$ and $h$ slowly varying at infinity, if and only if $%
\nu(U_{x})\sim C(\alpha) x^{2-\alpha }h(1/x)$.
\end{theorem}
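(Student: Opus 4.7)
The plan is to mirror the discrete-time proof of Theorem \ref{pr:NSC}, replacing the unit disk by the left half-plane $\mathbb{H}^{-}$. The starting point is the spectral representation
\begin{equation*}
\mathrm{var}(S_T(g)) = 2\int_0^T (T-s)\langle g, T_s g\rangle\,\mathrm{d} s = 2\int_0^T (T-s)\int_{\mathbb{H}^-}\mathrm{e}^{zs}\,\nu(\mathrm{d} z)\,\mathrm{d} s.
\end{equation*}
Swapping the order of integration (justified since $|\mathrm{e}^{zs}|\leq 1$), evaluating the inner integral as $-2T/z + 2(\mathrm{e}^{zT}-1)/z^{2}$, dividing by $T$, and using the symmetry $\nu(\mathrm{d}\bar z)=\nu(\mathrm{d} z)$ to take real parts, one obtains
\begin{equation*}
\frac{\mathrm{var}(S_T)}{T} = \varsigma^{2} + J_T, \qquad J_T := \frac{2}{T}\,\Re\!\int_{\mathbb{H}^-}\frac{\mathrm{e}^{zT}-1}{z^{2}}\,\nu(\mathrm{d} z),
\end{equation*}
provided $\varsigma^{2} < \infty$. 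Part~1 of the theorem therefore reduces to the equivalence $J_T \to K$ if and only if $\nu(U_x)/x \to K/\pi$.

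The next step is to show that only $\nu$-mass near the origin contributes to the limit of $J_T$. For any $\varepsilon > 0$ the contribution from $\{|z|>\varepsilon\}$ is uniformly bounded by $4\nu(\mathbb{H}^-)/(T\varepsilon^{2})$ and vanishes as $T\to\infty$; hence the limit, if it exists, is determined by the mass of $\nu$ in arbitrarily small neighborhoods of the origin. A direct computation in $z = a+\mathrm{i} b$, $a \leq 0$, shows that the real part of $(\mathrm{e}^{zT}-1)/z^{2}$ is of order $T^{2}$ precisely on the cone $U_{1/T}$ and, after integration, is negligible outside any region of the form $U_{x}$ anchored at the imaginary axis. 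This identifies $U_{1/T}$ as the natural scale on which $J_T$ accumulates its mass.

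To turn this heuristic into the equivalence, introduce the monotone function $\mu(x) := \nu(U_x)$ and rewrite the near-origin portion of $J_T$ as a Stieltjes integral against $\mathrm{d}\mu$. Parametrizing $\mathbb{H}^{-}$ by rays emanating from $0$ and applying Fubini, one obtains a representation $J_T = \int_0^\infty \Psi(Tx)\,\mathrm{d}\mu(x) + o(1)$ with an explicit kernel $\Psi$ arising from the half-plane analogue of the Poisson kernel, satisfying $\int_0^\infty \Psi(s)s^{-2}\,\mathrm{d} s = \pi$. Karamata's Tauberian theorem from the Appendix then yields $J_T \to K$ if and only if $\mu(x)/x \to K/\pi$, proving Part~1. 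For the second statement, the hypotheses $\varsigma^{2} < \infty$ and $\liminf_{T\to\infty}\mathrm{var}(S_T)/T = \infty$ force $J_T\to\infty$ and hence $\mathrm{var}(S_T)/T \sim J_T$; the regular-variation form of the same Tauberian theorem then gives $J_T \sim T^{\alpha-1}h(T)$ if and only if $\nu(U_x)\sim C(\alpha)x^{2-\alpha}h(1/x)$, completing the argument.

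The main obstacle is the explicit identification of the kernel $\Psi$ and the verification that its normalization is exactly $\pi$. A likely cleaner route is via the Cayley transform $w = (1+z)/(1-z)$, which maps $\mathbb{H}^{-}$ onto the closed unit disk and carries the cone $U_x\subset \mathbb{H}^{-}$ onto a cone near $1\in\Gamma$; combined with a resolvent-based identification of $S_T$ with the partial sums of an associated discrete chain, this would reduce the continuous-time statement directly to Theorem~\ref{pr:NSC}, thereby inheriting the constant $\pi$ from the discrete proof. A secondary technical point is maintaining uniform control of the tail contribution $\{|z|>\varepsilon\}$ as $\varepsilon \to 0$, which is where the assumption $\varsigma^{2}<\infty$ is indispensable, ensuring that the oscillatory contributions do not accumulate.
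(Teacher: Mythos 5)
Your skeleton (spectral formula for $\mathrm{var}(S_T)$, localization near $z=0$, the cone $U_x$, a Fej\'er-type kernel, a Tauberian theorem) matches the paper's, but the proposal has a structural gap at its centre. You split $\mathrm{var}(S_T)/T=\varsigma^2+J_T$ with $J_T=\frac{2}{T}\Re\int(\mathrm{e}^{zT}-1)z^{-2}\,\nu(\mathrm{d}z)$. On the scale $|z|T\lesssim 1$ the integrand of $J_T$ expands as $2\Re(1/z)+T+\cdots$, so $J_T$ still carries the full $\Re(1/z)$ singularity and is not of one sign; the asserted representation $J_T=\int_0^\infty\Psi(Tx)\,\mathrm{d}\mu(x)+o(1)$ with a nonnegative kernel $\Psi$ normalized to $\pi$ does not follow from ``parametrizing by rays and applying Fubini,'' and you explicitly defer its verification as ``the main obstacle.'' The paper avoids this by the orthogonal decomposition $S_T=\mathbb{E}_0(S_T)+(S_T-\mathbb{E}_0(S_T))$, which yields two \emph{nonnegative} terms: $I_1=\mathbb{E}[(S_T-\mathbb{E}_0S_T)^2]$, whose integrand $\Re[2(\mathrm{e}^{zT}-zT-1)z^{-2}-|1-\mathrm{e}^{zT}|^2|z|^{-2}]/T$ is dominated by $C|x|/(x^2+y^2)$ and converges to $-\Re(1/z)$ (giving $\varsigma^2$ by dominated convergence, and its necessity by Fatou applied to a nonnegative quantity --- a step your signed $J_T$ cannot deliver); and $I_2=\mathbb{E}[(\mathbb{E}_0S_T)^2]=\int|1-\mathrm{e}^{zT}|^2|z|^{-2}\,\nu(\mathrm{d}z)$, which is then reduced, through the $\epsilon_T,\delta_T, D_{a,T}, U_{a,T}$ machinery using the finiteness of $|x|\nu(\mathrm{d}z)/(x^2+y^2)$, to $\frac1T\int_0^\pi\sin^2(Tx/2)\sin^{-2}(x/2)\,G(\mathrm{d}x)$ with $G(x)=\nu(U_x)$. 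That localization is where all the work lies and your proposal does not carry it out.

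Two further points. First, the concluding Tauberian step is not ``Karamata's theorem from the Appendix'': the appendix results (Theorems \ref{t1} and \ref{thm:tm}) concern Laplace transforms and monotone densities and do not apply to the Fej\'er kernel; the correct tool is Theorem A of Deligiannidis--Utev, which is exactly the Fej\'er-kernel Tauberian equivalence and is what the paper invokes for both the case $K$ finite and the regularly varying case $\alpha\geq 1$. Second, the suggested ``cleaner route'' via the Cayley transform plus ``a resolvent-based identification of $S_T$ with partial sums of an associated discrete chain'' is speculative: no such identification is available (the continuous-time integral $\int_0^Tg(\xi_s)\,\mathrm{d}s$ is not the partial-sum process of a skeleton chain), and the paper instead repeats the half-plane analysis directly rather than reducing to Theorem \ref{pr:NSC}.
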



\section{Proofs}

\noindent\textbf{Proof of Representation Lemma \ref{thm:normaldensity}.} For 
$t\in\lbrack-\pi,\pi]$ and $z\in D_{0}$ define the function 
\begin{equation*}
f(t,z):={\frac{1}{2\pi}}\Bigg[1+\sum_{k=1}^{\infty}\Big(z^{k}\mathrm{e}^{%
\mathrm{i}tk}+\bar{z}^{k}\mathrm{e}^{-\mathrm{i}tk}\Big)\Bigg]={\frac {1}{%
2\pi}}\frac{1-|z|^{2}}{|1-z\mathrm{e}^{\mathrm{i}t}|^{2}}, 
\end{equation*}
the Poisson kernel for the unit disk. Our approach is to integrate on $D_{0}$
with respect to $\nu_{0}(\mathrm{d}z)$, obtaining in this way a function
defined on $[0,2\pi]$ as follows 
\begin{equation*}
f(t)={\frac{1}{2\pi}}\int_{D_{0}}\left( 1+\sum_{k=1}^{\infty}\Big(z^{k}%
\mathrm{e}^{\mathrm{i}tk}+\bar{z}^{k}\mathrm{e}^{-\mathrm{i}tk}\Big)\right)
\nu_{0}(\mathrm{d}z)={\frac{1}{2\pi}}\int_{D_{0}}\frac{1-|z|^{2}}{|1-z%
\mathrm{e}^{\mathrm{i}t}|^{2}}\nu_{0}(\mathrm{d}z). 
\end{equation*}
The function is well defined since we are integrating the positive Poisson
kernel over the open disk, and in fact, by using polar coordinates, we also
have 
\begin{align*}
\int_{0}^{2\pi}f(t)\mathrm{d}t & ={\frac{1}{2\pi}}\int_{s=0}^{2\pi}\int
_{r=0}^{1^{-}}\int_{t=0}^{2\pi}\frac{1-r^{2}}{1-2r\cos(s+t)+r^{2}}\mathrm{d}%
t\,\nu_{0}(\mathrm{d}r,\mathrm{d}s) \\
& \leq{\frac{1}{2\pi}}\int_{s=0}^{2\pi}\int_{r=0}^{1^{-}}\frac{2\pi(1-r^{2})%
}{1-r^{2}}\,\nu_{0}(\mathrm{d}r,\mathrm{d}s)={2\pi}<\infty.
\end{align*}
Therefore, it is obvious that $f\in L^{1}(0,2\pi)$, and it makes sense to
calculate 
\begin{align}
\int_{0}^{2\pi}\mathrm{e}^{\mathrm{i}tn}f(t)\mathrm{d}t & ={\frac{1}{2\pi}}%
\int_{0}^{2\pi}\mathrm{e}^{\mathrm{i}tn}\int_{D_{0}}\Big[1+\sum_{k=1}^{%
\infty}\Big(z^{k}\mathrm{e}^{\mathrm{i}tk}+\bar{z}^{k}\mathrm{e}^{-\mathrm{i}%
tk}\Big)\Big]\nu_{0}(\mathrm{d}z)\mathrm{d}t  \label{comp} \\
& =\int_{D_{0}}z^{n}\nu_{0}(\mathrm{d}z).  \notag
\end{align}
Because of the decomposition 
\begin{equation*}
\mathrm{cov}(X_{0},X_{n})=\int_{D}z^{n}\mathrm{d}\nu(z)=\int_{D_{0}}z^{n}%
\mathrm{d}\nu_{0}(z)+\int_{0}^{2\pi}\mathrm{e}^{\mathrm{i}tn}\mathrm{\ }%
\nu_{\Gamma}(\mathrm{d}t), 
\end{equation*}
by (\ref{comp}) we obtain that%
\begin{equation*}
\mathrm{cov}(X_{0},X_{k})=\int_{0}^{2\pi}\mathrm{e}^{\mathrm{i}tn}f(t)%
\mathrm{d}t+\int_{0}^{2\pi}\mathrm{e}^{\mathrm{i}tn}\nu_{\Gamma }(\mathrm{d}%
t). 
\end{equation*}
Now, by (\ref{SpM}) the spectral distribution function $F$ associated with
the stationary sequence $(X_{i})_{i}$, is then given by (\ref{SMR}). \hfill$%
\square$

\bigskip

\bigskip\noindent\textbf{Proof of Corollary~\ref{prop:SpDens}.} The result
follows from Representation Lemma~\ref{thm:normaldensity} and Theorem A. To
obtain the last point in the theorem, by standard analysis, the spectral
measure has the following useful asymptotic representation 
\begin{align*}
F(x) & =\int_{0}^{x}f(t)\mathrm{d}t={\frac{1}{2\pi}}\int_{0}^{x}\int_{-1}^{1}%
\frac{1-\lambda^{2}}{1+\lambda^{2}-2\lambda\cos t}\nu(\mathrm{d}\lambda)dt \\
& =O(x)+{\frac{x}{\pi}}\int_{0}^{1}r(y)\frac{\mathrm{d}y}{x^{2}+y^{2}}\text{
\ \ as }x\rightarrow0^{+}.
\end{align*}
So, we derive $F(x)\sim {\frac{1}{2}}C(\alpha)x^{2-%
\alpha}h(1/x)$ if and only if 
\begin{equation*}
\int_{0}^{1}r(y)\frac{\mathrm{d}y}{x^{2}+y^{2}}\sim{\frac{\pi}{%
{2}} C(\alpha)}x^{1-\alpha }h(1/x)+O(1)\text{ \ \ as }%
x\rightarrow0^{+}. 
\end{equation*}

\bigskip

\noindent \textbf{Proof of Theorem~\ref{thm:harmonicmeasure}.} As usual, let 
$D$ be the closed unit disk, and $D_{0}$ its interior. From Representation
Lemma ~\ref{thm:normaldensity} the spectral density $f\in L_{1}([-\pi ,\pi ])
$ is given by the formula 
\begin{equation*}
f(t)={\frac{1}{2\pi }}\int_{D_{0}}\frac{1-|z|^{2}}{|1-z\mathrm{e}^{\mathrm{i}%
t}|^{2}}\nu (\mathrm{d}z).
\end{equation*}%
Notice that $D$ is regular for Brownian motion, in the sense that all points
in $\Gamma =\partial D$ are regular, i.e. for all $z\in \partial D$ and for $%
\tilde{\tau}_{D}^{z}:=\inf \{t>0:B_{t}^{z}\notin D\}$ we have $\mathrm{P}%
^{z}\{\tilde{\tau}_{D}^{z}=0\}=1$. The harmonic measure in $D$ from $z$ is
the probability measure on $\partial D$, $\mathrm{hm}(z,D;\cdot )$ given by 
\begin{equation*}
\mathrm{hm}(z,D;V)=\mathrm{P}^{z}\{B(\tau _{D}^{z})\in V\},
\end{equation*}%
where $\mathrm{P}^{z}$ denotes the probability measure of Brownian motion
started at the point $z$, and $V$ is any Borel subset of $\partial D$.

Since $\partial D$ is piecewise analytic, $\mathrm{hm}(z,D;\cdot )$ is
absolutely continuous with respect to Lebesgue measure (length) on $\partial
D$ and the density is the Poisson kernel (see for example \cite{Law}). In
the case of the unit disk $D$ the density for $\mathrm{hm}(z,D;\cdot )$ for $%
z\in D$, $w\in \partial D$ or $t\in \lbrack 0,2\pi ]$, is given by 
\begin{equation*}
H_{D}(z,w)=\frac{1}{2\pi }\frac{1-|z|^{2}}{|w-z|^{2}}=\frac{1}{2\pi }\frac{%
1-|z|^{2}}{|1-e^{it}z|^{2}}.
\end{equation*}%
Let $Z$ be a $D$-valued random variable with probability measure $\nu $
properly normalized, independent of the Brownian motion. 
Then 
\begin{align*}
\int_{-x}^{x}f(t)\mathrm{d}t& ={\frac{1}{2\pi }}\int_{D_{0}}\int_{-x}^{x}%
\frac{1-|z|^{2}}{|1-z\mathrm{e}^{\mathrm{i}t}|^{2}}\mathrm{d}t\,\nu _{0}(%
\mathrm{d}z) \\
& =\int_{D_{0}}\mathrm{P}^{z}\{B(\tau _{D}^{z})\in (-x,x)\}\nu _{0}(\mathrm{d%
}z).
\end{align*}%
On the other hand, since $\Gamma =\partial D$ is regular for Brownian
motion, we have for all $z\in \Gamma $, $\mathrm{P}^{z}(B(\tau _{D}^{z})\in
\Gamma _{x})=1$ if $z\in \Gamma _{x}$ and $0$ otherwise. Thus 
\begin{equation*}
\nu (\Gamma _{x})=\int_{\Gamma _{x}}\nu _{\Gamma }(\mathrm{d}z)=\int_{\Gamma
}\mathrm{P}^{z}(B(\tau _{D}^{z})\in \Gamma _{x})\nu _{\Gamma }(\mathrm{d}z).
\end{equation*}%
Therefore from Representation Lemma~\ref{thm:normaldensity} we have 
\begin{align*}
G(x):=\int_{-x}^{x}F(\mathrm{d}x)& =\int_{D_{0}}\mathrm{P}^{z}\{B(\tau
_{D}^{z})\in \Gamma _{x}\}\nu _{0}(\mathrm{d}z)+\int_{\Gamma }\mathrm{P}%
^{z}(B(\tau _{D}^{z})\in \Gamma _{x})\nu _{\Gamma }(\mathrm{d}z) \\
& =\nu (D)\mathrm{P}\Big(B^{Z}(\tau _{D}^{Z})\in \Gamma _{x}\Big)=:\nu
(D)H_{\nu ,D}(\Gamma _{x}).
\end{align*}%
The measure $H_{\nu ,D}(\cdot )$, is essentially the harmonic measure when
Brownian motion starts at a random point and stops when it hits $\partial D$%
. Finally, from Theorem A we conclude that $\mathrm{var}(S_{n})$ is
regularly varying if and only if the measure $H_{\nu ,D}$ is regularly
varying at the origin. \hfill $\square $

\bigskip

\noindent\textbf{Proof of Theorem~\ref{pr:NSC}.} The first part is motivated
by Gordin and Lif\v{s}ic \cite{GL} (see Theorem 7.1 in \cite{BI}; see also 
\cite{DL}). We write the martingale type orthogonal decomposition:%
\begin{equation*}
S_{n}=\mathbb{E}_{0}(S_{n})+\sum_{i=1}^{n}\mathbb{E}_{i}(S_{n}-S_{i-1})-%
\mathbb{E}_{i-1}(S_{n}-S_{i-1}). 
\end{equation*}
So 
\begin{align*}
\mathrm{var}(S_{n}) & =\mathbb{E}(\mathbb{E}_{0}(S_{n}))^{2}+\sum_{i=1}^{n}%
\mathbb{E(E}_{i}(S_{n}-S_{i-1})-\mathbb{E}_{i-1}(S_{n}-S_{i-1}))^{2} \\
& =\mathbb{E}(\mathbb{E}_{0}(S_{n}))^{2}+\sum_{i=1}^{n}\mathbb{E(E}%
_{1}(S_{i})-\mathbb{E}_{0}(S_{i}))^{2}.
\end{align*}
By applying spectral calculus, 
\begin{align*}
& =\mathbb{E}(\mathbb{E}_{0}(S_{n}))^{2}+\sum_{j=1}^{n}%
\int_{D}|1+z+...+z^{j-1}|^{2}(1-|z|^{2})\nu(\mathrm{d}z) \\
& =\mathbb{E}(\mathbb{E}_{0}(S_{n}))^{2}+\sum_{j=1}^{n}\int_{D_{0}}\frac{%
|1-z^{j}|^{2}(1-|z|^{2})}{|1-z|^{2}}\nu(\mathrm{d}z) \\
& =\mathbb{E}(\mathbb{E}_{0}(S_{n}))^{2}+n\int_{D_{0}}\delta_{n}(z)\frac{%
(1-|z|^{2})}{|1-z|^{2}}\nu(\mathrm{d}z).
\end{align*}
Note that $\delta_{n}(z)\leq4$ and for all $z\in D_{0}$ 
\begin{equation*}
\delta_{n}(z)=\frac{1}{n}\sum_{j=1}^{n}|1-z^{j}|^{2}\rightarrow1\text{ as }%
n\rightarrow\infty. 
\end{equation*}
Thus, by the Lebesgue dominated theorem and our conditions 
\begin{equation*}
\lim_{n\rightarrow\infty}\int_{D_{0}}\delta_{n}(z)\frac{(1-|z|^{2})}{%
|1-z|^{2}}\nu(\mathrm{d}z)=\int_{D_{0}}\frac{1-|z|^{2}}{|1-z|^{2}}\nu(%
\mathrm{d}z)=\sigma^{2}. 
\end{equation*}
This, along with Fatou's lemma, proves that $\mathrm{var}(S_{n})/n%
\rightarrow K$ exists if and only if $\sigma^{2}<\infty$ and 
\begin{equation*}
\frac{1}{n}\mathbb{E}(\mathbb{E}_{0}(S_{n}))^{2}\rightarrow L=K-\sigma^{2}. 
\end{equation*}
Now, let us introduce a new measure on $D_{0}$ 
\begin{equation*}
\mu(z)=\frac{1-|z|}{|1-z|^{2}}\nu(\mathrm{d}z), 
\end{equation*}
which is finite when $\sigma^{2}<\infty$.

To complete the proof of the first part of the theorem, we notice that by
the spectral calculus 
\begin{align*}
\mathbb{E}(\mathbb{E}_{0}(S_{n}))^{2} & =\int_{D}|z+...+z^{n}|^{2}\nu(%
\mathrm{d}z)\;=\int_{D}|z|^{2}\frac{|1-z^{n}|^{2}}{|1-z|^{2}}\nu(\mathrm{d}z)
\\
& =\int_{D}\frac{|1-z^{n}|^{2}}{|1-z|^{2}}\nu(\mathrm{d}z)-%
\int_{D}|1-z^{n}|^{2}(1+|z|)\mu(\mathrm{d}z) \\
& =\int_{D}\frac{|1-z^{n}|^{2}}{|1-z|^{2}}\nu(\mathrm{d}z)+O(1),
\end{align*}
since $\mu$ is a finite measure.

To prove the second part of this theorem, we show equivalence of (i) and
(ii) and then of (ii) and (iii). In addition, throughout we use notation: 
\begin{equation*}
z=|z|e^{\mathrm{i}\mathrm{Arg}(z)},|z|=1-y,\theta=\mathrm{Arg}(z). 
\end{equation*}

We note first that 
\begin{equation*}
|1-z^{n}|^{2}=(1-|z|^{n})^{2}+|z|^{n}\sin^{2}(n\theta/2). 
\end{equation*}
The proof strategy consists in showing, several successive approximation
steps, that 
\begin{equation*}
\frac{1}{n}\mathbb{E}(\mathbb{E}_{0}(S_{n}))^{2}  = \int_0^{\pi} \frac{%
\sin^2(n\theta/2)}{\sin^2 (\theta/2)} G(\mathrm{d} \theta) + o(1), 
\end{equation*}
for some appropriate measure $G$, and then to apply Theorem A. With this in
mind we write 
\begin{align*}
I_{n} & =\frac{1}{n}\int_{D}\frac{|1-z^{n}|^{2}}{|1-z|^{2}}\nu(\mathrm{d}z)
\\
& =\frac{1}{n}\int_{D}\frac{|z|^{n}\sin^{2}(n\mathrm{Arg}(z)/2)}{|1-z|^{2}}%
\nu(\mathrm{d}z)+\frac{1}{n}\int_{D}\frac{(1-|z|^{n})^2}{|1-z|^{2}}\nu(%
\mathrm{d}z)=:I_{n}^{\prime}+\Delta_{n}^{\prime}.
\end{align*}
Note that 
\begin{align*}
\Delta_{n}^{\prime} & =\int_{D_{0}}\Big(\frac{(1-|z|^{n})}{n(1-|z|)}\Big)%
(1-|z|^{n})\frac{(1-|z|)}{|1-z|^{2}}\nu(\mathrm{d}z)\; \\
& =\int_{D}\frac{1}{n}\Big(\sum_{j=0}^{n-1}|z|^{j}\Big)(1-|z|^{n})\mu(%
\mathrm{d}z).
\end{align*}
By Lebesgue dominated convergence theorem, since the bounded integral
argument goes to $0$ for each $|z|\leq1,$ we have $\Delta_{n}^{\prime}%
\rightarrow0$ as $n\rightarrow\infty$.

Then, write 
\begin{align*}
I_{n}^{\prime} & =\frac{1}{n}\int_{D}\frac{\sin^{2}(n\mathrm{Arg}(z)/2)}{%
|1-z|^{2}}\nu(\mathrm{d}z)\;-\;\frac{1}{n}\int_{D}(1-|z|^{n})\frac{\sin^{2}(n%
\mathrm{Arg}(z)/2)}{|1-z|^{2}}\nu(\mathrm{d}z) \\
& =:I_{n}^{\prime\prime}+\Delta_{n}^{\prime\prime},
\end{align*}
and again 
\begin{align*}
\Delta_{n}^{\prime\prime} & =\int_{D_{0}}\Big(\frac{(1-|z|^{n})}{n(1-|z|)}%
\Big)\sin^{2}(n\mathrm{Arg}(z)/2)\frac{(1-|z|)}{|1-z|^{2}}\nu(\mathrm{d}z)\;
\\
& =\int_{D_{0}}\Big(\frac{(1-|z|^{n})}{n(1-|z|)}\Big)\sin^{2}(n\mathrm{Arg}%
(z)/2)\mu(\mathrm{d}z).
\end{align*}
Note that by Lebesgue dominated theorem $\Delta_{n}^{\prime\prime}%
\rightarrow0$ as $n\rightarrow\infty$, since the bounded integral argument
goes to $0$ for each $|z|<1$.

Fix now a small positive $a>0$, recall that $z=(1-y)e^{i\theta }$ and define
an auxiliary subset of $D$ 
\begin{equation*}
D_{a}=\{z=(1-y)e^{i\theta }\;:\;0<|\theta |\leq a\;,\;0\leq y\leq a\;\}.
\end{equation*}%
Further, notice that by the dominated convergence theorem 
\begin{equation*}
\varepsilon _{n}=\int_{D_{a}}\Big|\frac{\sin (n\mathrm{Arg}(z)/2)}{n\mathrm{%
Arg}(z)}\Big|\mu (\mathrm{d}z)\rightarrow 0\text{ as }n\rightarrow \infty ,
\end{equation*}%
since the bounded integral argument goes to $0$ for each $|z|<1$.\newline
Let $N$ be large enough, so that $|\epsilon _{n}|<1$ for all $n\geq N$, and
take {$\delta_{n}=\max(\mathrm{e}^{-n},\sqrt{%
\varepsilon_{n}})$} 
so that $\delta
_{n}>0$ for all $n$. In this way $\epsilon _{n}/\delta _{n}=0$ is
well-defined if $\epsilon _{n}=0$, and $\epsilon _{n}/\delta _{n}\leq \sqrt{%
\epsilon _{n}}\rightarrow 0$ as $n\rightarrow \infty $. Further define two
auxiliary sequences of subsets of $D_{a}$ 
\begin{align*}
D_{a,n}& =\{z=(1-y)e^{i\theta }\;:\;0<\delta _{n}|\theta |\leq y\leq
a,|\theta |\leq a\;\}\;, \\
U_{a,n}& =U_{a}\setminus {D_{a,n}}=\{z=(1-y)e^{i\theta }\;:\;0\leq y<\delta
_{n}|\theta |,\;|\theta |\leq a\;\}.
\end{align*}%
Next, let 
\begin{equation*}
g_{n}(z)=\frac{\sin ^{2}(n\mathrm{Arg}(z)/2)}{n|1-z|^{2}}
\end{equation*}%
and write 
\begin{equation*}
I_{n}^{\prime \prime }=\frac{1}{n}\int_{D}g_{n}(z)\nu (\mathrm{d}z)=\frac{%
O(1)}{n}+\int_{D_{a}}g_{n}(z)\nu (\mathrm{d}z)
\end{equation*}%
and 
\begin{equation*}
\int_{D_{a}}g_{n}(z)\nu (\mathrm{d}z)=\int_{D_{a,n}}g_{n}(z)\nu (\mathrm{d}%
z)+\int_{U_{a,n}}g_{n}(z)\nu (\mathrm{d}z)=:\Delta _{n}^{\prime \prime
\prime }+I_{n}^{\prime \prime \prime }\text{ .}
\end{equation*}%
Notice that by construction ($\theta =\mathrm{Arg}(z)$) 
\begin{align*}
\Delta _{n}^{\prime \prime \prime }& \leq \int_{D_{a,n}}\Big|\frac{\sin
(n\theta /2)}{n\theta }\Big||\sin (n\theta /2)|\frac{|\theta |}{|1-z|^{2}}%
\nu (\mathrm{d}z)\; \\
& \leq \frac{1}{\delta _{n}}\int_{D_{a,n}}\Big|\frac{\sin (n\theta /2)}{%
n\theta }\Big|\frac{1-|z|}{|1-z|^{2}}\nu (\mathrm{d}z)\; \\
& =\frac{1}{\delta _{n}}\int_{D_{a,n}}\Big|\frac{\sin (n\theta /2)}{n\theta }%
\Big|\mu (\mathrm{d}z)\leq \frac{1}{\delta _{n}}\int_{D_{a}}\Big|\frac{\sin
(n\theta /2)}{n\theta }\Big|\mu (\mathrm{d}z)\;=\frac{\varepsilon _{n}}{%
\delta _{n}}\rightarrow 0.
\end{align*}%
In addition, on $U_{a,n}$ 
\begin{equation*}
|1-z|^{2}=y^{2}+(1-y)\sin ^{2}(\theta /2)=\sin ^{2}(\theta /2)(1+\delta
_{z,n}),
\end{equation*}%
where $|\delta _{z,n}|\leq c\delta _{n}$ for some positive $c$ and hence 
\begin{align*}
I_{n}^{\prime \prime \prime }& =\frac{1}{n}\int_{U_{a,n}}\frac{\sin
^{2}(n\theta /2)}{|1-z|^{2}}\nu (\mathrm{d}z) \\
& =\frac{1}{n}\int_{U_{a,n}}\frac{\sin ^{2}(n\theta /2)}{\sin ^{2}(\theta /2)%
}\nu (\mathrm{d}z)\;+\;\frac{1}{n}\int_{U_{a,n}}\frac{\sin ^{2}(n\theta /2)}{%
\sin ^{2}(\theta /2)}\Big(1-\frac{1}{1+\delta _{z,n}}\Big)\nu (\mathrm{d}z)
\\
& =:I_{n}^{(4)}+\Delta _{n}^{(4)}.
\end{align*}%
By construction $\Delta _{n}^{(4)}=o(1)I_{n}^{iv}$ and then $I_{n}^{\prime
\prime \prime }=I_{n}^{(4)}(1+o(1))$. Write 
\begin{align*}
I_{n}^{(4)}& =\frac{1}{n}\int_{U_{a}}\frac{\sin ^{2}(n\theta /2)}{\sin
^{2}(\theta /2)}\nu (\mathrm{d}z)\;-\;\frac{1}{n}\int_{U_{a}\cap D_{a,n}}%
\frac{\sin ^{2}(n\theta /2)}{\sin ^{2}(\theta /2)}\nu (\mathrm{d}z). \\
& =I_{n}^{(5)}-\Delta _{n}^{(5)}.
\end{align*}%
In a similar way as $\Delta _{n}^{\prime \prime \prime }$ has been estimated
in Step 3, by construction ($\theta =\mathrm{Arg}(z)$) we obtain 
\begin{align*}
\Delta _{n}^{(5)}& \leq \int_{U_{a}\cap D_{a,n}}\Big|\frac{\sin (n\theta /2)%
}{n\theta }\Big||\sin (n\theta /2)||(1+\delta _{z,n})|\frac{|\theta |}{%
|1-z|^{2}}\nu (\mathrm{d}z)\; \\
& \leq \frac{1+c\delta _{n}}{\delta _{n}}\int_{D_{a,n}}\Big|\frac{\sin
(n\theta /2)}{n\theta }\Big|\frac{1-|z|}{|1-z|^{2}}\nu (\mathrm{d}z)\; \\
& \leq \frac{1+c\delta _{n}}{\delta _{n}}\int_{D_{a}}\Big|\frac{\sin
(n\theta /2)}{n\theta }\Big|\mu (\mathrm{d}z)=\frac{(1+c\delta
_{n})\varepsilon _{n}}{\delta _{n}}\rightarrow 0\text{ as }n\rightarrow
\infty .
\end{align*}%
Finally, let us define the set 
\begin{equation*}
D_{+}=\{z=(1-y)e^{i\theta }\;:\;0\leq y\leq |\theta |<\pi \}\;.
\end{equation*}%
Then, it follows that $I_{n}/n\rightarrow L$ if and only if 
\begin{equation*}
J_{n}=\int_{D_{+}}I_{n}(\mathrm{Arg}(z))\nu (\mathrm{d}z)\rightarrow 0,
\end{equation*}%
where $I_{n}$ is the Fejer kernel, 
\begin{equation*}
I_{n}(x):=\frac{\sin ^{2}(nx/2)}{n\sin ^{2}(x/2)}.
\end{equation*}%
Define $G(x)=\nu (U_{x})$ and notice that it is a non-negative,
non-decreasing bounded function. In addition, for any step function $%
g(\theta )=I(u<|\theta |\leq v)=I(|\theta |\leq v)-I(|\theta |\leq w)$ with $%
u,w$ being continuity points of $G(x)$ we have 
\begin{align*}
\int_{D_{+}}g(\mathrm{Arg}(z))\nu (\mathrm{d}z)& =\int_{D_{+}}I(g(\theta
))\nu (\mathrm{d}z)=\int_{D_{+}}I(|\theta |\leq v)\nu (\mathrm{d}%
z)-\int_{D_{+}}I(|\theta |\leq w)\nu (\mathrm{d}z) \\
& =\nu (U_{v})-\nu (U_{w})=\int g(x)dG(x)
\end{align*}%
and then, by Caratheodory and Lebesgue theorem we have 
\begin{equation*}
J_{n}=\int_{0}^{\pi }I_{n}(x)dG(x).
\end{equation*}%
By Theorem A, \ $J_{n}/n\rightarrow L$ if and only if $L={\pi
C}$ where $G(x)/x=\nu (U_{x})/x\rightarrow C$ as $x\rightarrow 0^{+}$,
completing the proof of equivalence of (i) and (ii) in Part (b).

To prove the equivalence of (ii) and (iii) in Part (b) under the finiteness
of integral \eqref{normalsigma}, let us define 
\begin{align*}
W_{x}& =\{z=(1-r)e^{iu}\in D\;:\;0\leq |u|\leq r\leq x\}\;, \\
D_{1/x}& =\{z=(1-r)e^{iu}\in D\;:\;0\leq |u|,r\leq x\}\;.
\end{align*}%
Since $\nu ({1})=0$, it follows that $\nu (W_{x})\rightarrow 0$ as $%
x\rightarrow 0^{+}$.

On the other hand, on $W_{x}$ 
\begin{equation*}
\frac{1-|z|}{|1-z|^{2}}\geq \frac{1}{2x}
\end{equation*}%
and hence by \eqref{normalsigma}, 
\begin{equation*}
0\leftarrow \int_{W_{x}}\frac{1-|z|}{|1-z|^{2}}\nu (\mathrm{d}z)\geq \frac{1%
}{2x}\nu (W_{x}),
\end{equation*}%
which implies that $\nu (D_{1/x})/x\rightarrow C$ if and only if (ii) holds
as $x\rightarrow 0^{+}$ and this completes the proof. \hfill $\square $

\begin{remark}
The sufficient part in Theorem \ref{pr:NSC} can be derived directly by
performing coordinate transformation mapping the open unit disk to the upper
half-plane and further careful analysis. Briefly, we change coordinates to
the upper half-plane $\mathbb{H}:=\{(a,b):a\in\mathbb{R},b>0\}$, via the
inverse Cayley transform $\phi(w):\mathbb{H}\rightarrow D_{0}$, where $\phi
(w):=(1+\mathrm{i}w)/(1-\mathrm{i}w)$. The finite measure $\nu$ is
transformed to a finite measure $\rho$ on $\mathbb{H}$, which for simplicity
we can assume to be a probability measure. Then, for $w:=a+\mathrm{i}b$ we
have 
\begin{equation*}
\sigma^{2}=\int_{\mathbb{H}}\frac{1-|\phi(a+\mathrm{i}b)|^{2}}{|1-\phi (a+%
\mathrm{i}b)|^{2}}\mathrm{d}v(\phi^{-1}(z))=\int_{a=-\infty}^{\infty}%
\int_{b=0}^{\infty}\frac{b}{a^{2}+b^{2}}\mathrm{d}\rho(a,b). 
\end{equation*}
and a further change of variables $z=\tan(t/2)$ gives 
\begin{equation*}
\int_{t=0}^{x}f(t)\mathrm{d}t=\frac{1}{2\pi}\int_{t=0}^{x}\iint_{\mathbb{H}}%
\frac{b}{(a+t/2)^{2}+b^{2}}\rho(\mathrm{d}a,\mathrm{d}b)\mathrm{d}t+o(x). 
\end{equation*}
\end{remark}

\bigskip

\noindent \textbf{Proof of Theorem \ref{thR(x)}.} Let $1\leq \alpha \leq 2$
and denote $C(n):=\sum_{i=0}^{n-1}\mathrm{cov}(X_{0},X_{i}).$ We start by
the well known representation 
\begin{equation*}
\mathrm{var}(S_{n})=n[\frac{2}{n}\sum_{k=1}^{n}C(k)-E(X_{0}^{2})].
\end{equation*}%
It is clear then, since $\mathrm{var}(S_{n})/n\rightarrow \infty $, that $%
\mathrm{var}(S_{n})$ has the same asymptotic behavior as $%
2\sum_{k=1}^{n}C(k).$ Implementing the notations%
\begin{equation*}
a_{k}=\int\nolimits_{0}^{1}x^{j}\nu _{1}(\mathrm{d}x)\text{ and }%
b_{k}=\int\nolimits_{-1}^{0}x^{j}\nu (\mathrm{d}x),
\end{equation*}%
where $\nu _{1}$ coincides with $\nu $ on $(0,1]$, and $\nu _{1}(\{0\})=0,$
we have the representation 
\begin{equation*}
C(k)=\sum_{j=0}^{k-1}a_{j}+\sum_{j=0}^{k-1}b_{j}=C_{1}(k)+C_{2}(k).
\end{equation*}%
We shall show that the terms $C_{1}(k)$ have a dominant contribution to the
variance of partial sum. To analyze $C_{2}(k)$ it is convenient to make
blocks of size $2$, a trick that has also appeared in \cite{HaggRos} where
it is attributed to \cite{Geyer}. We notice that 
\begin{equation*}
c_{l}=b_{2l}+b_{2l+1}=\int\nolimits_{-1}^{0}(x^{2l}+x^{2l+1})d\nu >0.
\end{equation*}%
Furthermore, for all $m$%
\begin{equation*}
\sum_{l=0}^{m-1}c_{l}=\sum_{l=0}^{m-1}\int\nolimits_{-1}^{0}x^{2l}(1+x)\nu (%
\mathrm{d}x)=\int\nolimits_{-1}^{0}\frac{1-x^{2m}}{1-x}\nu (\mathrm{d}%
x)\leq E(X_{0}^{2}).
\end{equation*}%
Therefore $|C_{2}(k)|\leq 2E(X_{0}^{2})$ and so, $\sum_{k=1}^{n}C_{2}(k)\leq
2nE(X_{0}^{2})$. Because $\mathrm{var}(S_{n})/n\rightarrow \infty $ we note
that $\mathrm{var}(S_{n})$ has the same asymptotic behavior as $%
2\sum_{k=1}^{n}C_{1}(k).$ Now, each $C_{1}(k)=\sum_{j=0}^{k-1}a_{k}$ with $%
a_{k}>0.$ So, because the sequence $(C_{1}(k))_{k}$ is increasing, by the
monotone Tauberian theorem (see Theorem \ref{thm:tm} in {\cite[Cor 1.7.3]%
{Bing}}) for all $\alpha \geq 1\mathrm{\ }$we have

\begin{equation}
\mathrm{var}(S_{n})\sim 2\sum_{k=1}^{n}C_{1}(k)\sim n^{\alpha }h(n)\quad 
\text{ if and only if }\quad C_{1}(n)\sim \alpha n^{\alpha -1}h(n)/2.\ 
\label{rel1}
\end{equation}%
Note now that 
\begin{equation*}
C_{1}(n)=\int\nolimits_{0}^{1}\frac{1-x^{n}}{1-x}\nu _{1}(\mathrm{d}x).
\end{equation*}%
It is convenient to consider the transformation $T:[0,1]\rightarrow \lbrack
0,1]$ defined by $T(x)=1-x$. For a Borelian $A$ of $[0,1]$ define the measure%
\begin{equation}
r(A)=\nu _{1}(T(A)).  \label{def r}
\end{equation}%
Then 
\begin{equation*}
C_{1}(n)=\int\nolimits_{0}^{1}\frac{1-(1-y)^{n}}{y}\ r(\mathrm{d}y).
\end{equation*}%
We shall integrate by parts. Denote%
\begin{equation*}
R(u)=\int\nolimits_{u}^{1}\frac{1}{y}r(\mathrm{d}y)\text{ and }%
U_{n}(u)=[1-(1-u)^{n}].
\end{equation*}%
Let $0<b<1.$ By the definition of $\nu _{1}$ we have $r(\{1\})=0.$ Since $%
U_{n}$ is continuous, by \cite[Theorem~18.4]{Bill95},%
\begin{equation*}
\int\nolimits_{b}^{1}\frac{1-(1-y)^{n}}{y}r(\mathrm{d}%
y)=[1-(1-b)^{n}]R(b)+n\int\nolimits_{b}^{1}(1-u)^{n-1}R(u)\mathrm{d}u.
\end{equation*}%
Note that 
\begin{equation*}
\lim \sup_{b\rightarrow 0^{+}}[1-(1-b)^{n}]R(b)\leq nbR(b)\leq n\mathbb{E}%
(X_{0}^{2})=o(\sigma _{n}^{2}).
\end{equation*}%
Therefore%
\begin{equation*}
C_{1}(n)=o(\sigma _{n}^{2})+n\int\nolimits_{0}^{1}(1-u)^{n-1}R(u)\mathrm{d}%
u.
\end{equation*}%
By the change of variables $1-u=\mathrm{e}^{-y}$ we have 
\begin{equation*}
\int\nolimits_{0}^{1}(1-u)^{n-1}R(u)du=\int\nolimits_{0}^{\infty }R(1-%
\mathrm{e}^{-y})\mathrm{e}^{-yn}\mathrm{d}y.
\end{equation*}%
It follows that%
\begin{gather}
C_{1}(n)\sim \alpha n^{\alpha -1}h(n)/2\quad \text{ if and only if}\quad
n\int\nolimits_{0}^{\infty }R(1-\mathrm{e}^{-y})\mathrm{e}^{-yn}dx\sim
\alpha n^{\alpha -1}h(n)/2  \label{rel2} \\
\int\nolimits_{0}^{\infty }\quad \text{ if and only if}\quad {\displaystyle%
\int\nolimits_{0}^{\infty }}R(1-\mathrm{e}^{-y})\mathrm{e}^{-yn}\mathrm{d}%
x\sim \alpha n^{\alpha -2}h(n)/2\text{.}  \notag
\end{gather}%
From here we shall apply Karamata's Tauberian Theorem~\ref{t1}. Since $%
\alpha \leq 2,$%
\begin{gather}
\int\nolimits_{0}^{\infty }R(1-\mathrm{e}^{-y})\mathrm{e}^{-yn}\mathrm{d}%
x\sim \alpha n^{\alpha -2}h(n)/2\text{ as }n\rightarrow \infty 
\label{rel 3} \\
\text{if and only if }\,\,\int\nolimits_{0}^{x}R(1-\mathrm{e}^{-y})\mathrm{d}%
y\sim c_{\alpha }^{\prime }x^{2-\alpha }h(1/x)\text{ as }x\rightarrow 0^{+},
\notag
\end{gather}%
where $c_{\alpha }^{\prime }=\alpha /[2\Gamma (3-\alpha )].$ Again by the
monotone Karamata Theorem~\ref{thm:tm} this happens if and only if 
\begin{equation}
R(1-\mathrm{e}^{-y})\sim (2-\alpha )c_{\alpha }^{\prime }x^{1-\alpha }h(1/x)%
\text{ as }x\rightarrow 0^{+}.  \label{rel4}
\end{equation}%
Changing variables $x=1-\mathrm{e}^{-y}$ and taking into account Karamata's
representation for slowly varying functions we get%
\begin{align}
R(x)& \sim c_{\alpha }\Big[{\ln (1/(1-x))}\Big]^{{1}-\alpha }h({-1/\ln (1-x)}%
)\   \label{rel 5} \\
& \sim c_{\alpha }x^{{1}-\alpha }h(1/x)\ \text{as }x\rightarrow 0^{+}, 
\notag
\end{align}%
where $c_{\alpha }=(2-\alpha )c_{\alpha }^{\prime }$. By combining the
results in relations (\ref{rel1})-(\ref{rel 5}) we have that%
\begin{equation*}
\mathrm{var}(S_{n})\sim n^{\alpha }h(n)\quad \text{ if and only if }\quad
R(x)\sim c_{\alpha }x^{{1}-\alpha }h(1/x)\ \text{as }x\rightarrow 0^{+}.
\end{equation*}%
It remains to note that 
\begin{equation*}
R(x)=\int\nolimits_{x}^{1}\frac{1}{y}r(\mathrm{d}y)=\int\nolimits_{0}^{1-x}%
\frac{1}{1-y}\upsilon _{1}(\mathrm{d}y)\sim V(x)\text{ as }x\rightarrow
0^{+}.
\end{equation*}

When $1<\alpha<2,$ one can say more: the distribution function induced by
the spectral measure is regularly varying.

Note that again by Theorem~\ref{thm:tm}, since the sequence $a_{k}$ is a
monotone sequence of positive numbers and $\alpha-1>0$ we have 
\begin{equation*}
C_{1}(n)=\sum_{k=0}^{n-1}a_{k}=\alpha n^{\alpha-1}h(n)/2\quad\text{ if and
only if }\quad a_{n}\sim\alpha(\alpha-1)n^{\alpha-2}h(n)/2. 
\end{equation*}
Now, by considering the mapping $T^{\prime}:[0,1]\rightarrow\lbrack0,\infty),
$ given by $T^{\prime -x},$ we obtain 
\begin{equation*}
a_{n}=\int_{0}^{1}t^{n}\ \nu_{1}(\mathrm{d}t)=\int_{0}^{\infty}e^{-nx}\psi(%
\mathrm{d}x), 
\end{equation*}
where $\psi(A)=r(T^{\prime}(A)),$ for $A$ Borelian in $[0,\infty)$. Letting $%
d_{\alpha}:=\alpha(\alpha-1)/2\Gamma(3-\alpha),$ it follows by Theorem \ref%
{t1} ({\cite[Thm~1.7.1']{Bing}}), that%
\begin{equation*}
a_{n}\sim\alpha(\alpha-1)n^{\alpha-2}h(n)/2\text{ as }n\rightarrow\infty\quad%
\text{ iff }\quad\psi\lbrack0,x]\sim d_{\alpha}x^{2-\alpha}h(1/x)\text{ as }%
x\rightarrow0^{+}. 
\end{equation*}
Then, we obtain as before, by the properties of slowly varying functions 
\begin{equation*}
\psi\lbrack0,x]\sim d_{\alpha}x^{2-\alpha}h(1/x)\text{ as }x\rightarrow 0^{+}%
\text{ iff }\nu_{1}(1-x,1]\sim d_{\alpha}x^{2-\alpha}h(1/x)\text{ as }%
x\rightarrow0^{+}. 
\end{equation*}
This last relation combined with (\ref{rel2}) gives the last part of the
theorem.\hfill$\square$

\bigskip

\begin{proof}[Proof of Theorem~\ref{thm:MH}] We only prove the central
limit theorem, with the other results following in a similar manner. Our
approach is based on the regeneration process. Define
\begin{align*}
T_{0} &  =\inf\{i>0:\xi_{i}\neq\xi_{0}\}\\
T_{k+1} &  =\inf\{i>T_{k}:\xi_{i}\neq\xi_{i-1}\},
\end{align*}
and let $\tau_{k}:=T_{k+1}-T_{k}$. It is well known that $(\xi_{\tau_{k}}%
,\tau_{k})_{k\geq1}$ are i.i.d. random variables with $\xi_{\tau_{k}}$ having
the distribution $\upsilon.$ Furthermore,
\[
\mathbb{P}(\tau_{1}>n|\xi_{\tau_{1}}=x)=|x|^{n}\text{.}%
\]
Then, it follows that%
\[
\mathbb{E}(\tau_{1}|\xi_{\tau_{1}}=x)=\frac{1}{1-|x|}\text{ }\ \ \text{and
\ \ }\mathbb{E}(\tau_{1})=%
{\displaystyle\int\nolimits_{-1}^{1}}
\frac{1}{1-|x|}\upsilon(\mathrm{d}x)=\theta.
\]
So, by the law of large numbers $T_{n}/n\rightarrow\theta$ a.s. Let us study
the tail distribution of $\tau_{1}.$ Since
\[
\mathbb{P}(\tau_{1}|X_{\tau_{1}}|>u|\xi_{\tau_{1}}=x)=\mathbb{P}(\tau
_{1}>u|\xi_{\tau_{1}}=x)=|x|^{u},
\]
by integration we obtain%
\begin{equation}
\mathbb{P}(\tau_{1}>u)=\int_{-1}^{1}|x|^{u}\upsilon(\mathrm{d}x)=2\int_{0}%
^{1}x^{u}\upsilon(\mathrm{d}x).\label{tail tau}%
\end{equation}
Using now the relation between $\upsilon(\mathrm{d}x)$ and $\mu(\mathrm{d}x)$
and symmetry we get
\[
\mathbb{P}(\tau_{1}>u)=2\theta\int_{0}^{1}x^{u}(1-x)\mu(\mathrm{d}%
x)=\theta\int_{0}^{1}x^{u}(1-x)\nu(\mathrm{d}x),
\]
where $\nu$ is spectral measure. By using the fact that $V(x)$ is slowly
varying and Lemma~\ref{lem:aux} in Section "Technicalities" it follows that
\begin{equation}
H(u):=E[\tau_{1}^{2}I(\tau_{1}\leq u)]\text{ is slowly varying as
}u\rightarrow\infty.\label{Feller}%
\end{equation}
For each $n$, let $m_{n}$ be such that $T_{m_{n}}\leq n<T_{m_{n}+1}.$ Note
that we have the representation
\begin{equation}
\sum_{k=1}^{n}X_{k}-\sum_{k=1}^{[n/\theta]}Y_{k}=(T_{0}-1)X_{0}+(\sum
_{k=1}^{m_{n}}\tau_{k}X_{\tau_{k}}-\sum_{k=1}^{[n/\theta]}\tau_{k}X_{\tau_{k}%
})+\sum_{k=T_{m_{n}+1}}^{n}X_{k}\text{,}\label{Rep}%
\end{equation}
\hfill where $Y_{k}=\tau_{k}X_{\tau_{k}}$ is a centered i.i.d. sequence, and
by (\ref{Feller}) is in the domain of attraction of a normal law (see
Feller~\cite{Feller66}). Therefore,
\begin{equation}
\frac{\sum_{k=1}^{[n/\theta]}Y_{k}}{b_{[n/\theta]}}\Rightarrow N(0,1)\text{.}%
\label{Fell}%
\end{equation}
where $b_{n}^{2}\sim nH(b_{n}).$ The rest of the proof is completed on the
same lines as in the proof of Example 12 in \cite{LPP}, the final result being
that%
\begin{equation*}
\frac{\sum_{k=1}^{n}X_{k}}{b_{[n/\theta]}}\Rightarrow N(0,1)\text{.}%
\end{equation*}
\end{proof}

\begin{proof}[Proof of Theorem~\ref{pr:harmoniccts}]
The proof is similar to that of Theorem~\ref{thm:harmonicmeasure} once one observes that for $z=a+\mathrm{i} b$, with $a\leq0$ and
$b\in\mathbb{R}$, $t>0$ and $x\in\mathbb{R}$ we have
\[
\mathrm{e}^{zt} = -\frac{1}{\pi}\int_{-\infty}^{\infty}\mathrm{e}^{\mathrm{i}
t x} \Re\Big[ \frac{1}{z-\mathrm{i}x}\Big] \mathrm{d} x =\frac{1}{\pi}%
\int_{-\infty}^{\infty}\mathrm{e}^{\mathrm{i} t x} \frac{-a}{a^{2} +
(b-x)^{2}} \mathrm{d} x.
\]
Therefore, by Fubini's theorem
\[
\mathrm{cov}(f(\xi_{0}),f(\xi_{t})) = \int_{\Re(z)\leq0} \mathrm{e}^{z t}
\nu(\mathrm{d} z) = \int_{x=-\infty}^{\infty}\mathrm{e}^{\mathrm{i} t x}
\Big[ \frac{-1}{\pi} \int_{\Re(z)\leq0} \Re\big(\frac{1}{z-\mathrm{i}%
x}\big) \nu(\mathrm{d} z) \Big] \mathrm{d}x.
\]
By letting $z=w\mathrm{i}$, where $w\in\mathbb{H}$, and the conformal
invariance of Brownian motion, one can immediately deduce that
\[
-\frac{1}{\pi} \Re\big(\frac{1}{z-\mathrm{i}x}\big) \mathrm{d}x,
\]
is the harmonic measure of Brownian motion in the left half-plane started at the point
$z$.
\end{proof}

\begin{proof}[Proof of Theorem~\ref{pr:NSCcts}.]
 First observe
that
\begin{align*}
\mathrm{var}(S_{T})  &  = 2\int_{s=0}^{T} (T-s) \int_{\Re(z)\leq0}
\mathrm{e}^{zx} \nu(\mathrm{d} z) \mathrm{d} s = 2 \int_{\Re(z)\leq0}
\Re\Big[ \frac{\mathrm{e}^{zT}-zT-1}{z^{2}}\Big] \nu(\mathrm{d} z).
\end{align*}
By splitting $S_{T}= \mathbb{E}_{0}(S_{T})+ S_{T}-\mathbb{E}_{0}(S_{T})$ we
obtain
\begin{align*}
\mathrm{var}(S_{T})  & = \mathbb{E}\big[(S_{T}-\mathbb{E}_{0}(S_{T}%
))^{2}\Big]+ \mathbb{E}\big[ \mathbb{E}_{0}(S_{T})^{2}\big]\\
& = \int_{\Re(z)\leq0} \Re\Big[ 2\frac{\mathrm{e}^{zT}-zT-1}{z^{2}} -
\frac{|1-\mathrm{e}^{zT}|^{2}}{|z|^{2}}\Big] \nu(\mathrm{d} z) + \int
_{\Re(z)\leq0} \frac{|1-\mathrm{e}^{zT}|^{2}}{|z|^{2}} \nu(\mathrm{d} z)=:
I_{1} +I_{2}.
\end{align*}
Assume $-\Re(1/z)\in L_{1}(\mathrm{d} \nu)$. Careful calculation shows that
\[
-\Re\big(\frac{1}{z}\big)\leftarrow\Re\Big[ 2\frac{\mathrm{e}^{zT}-zT-1}%
{z^{2}} - \frac{|1-\mathrm{e}^{zT}|^{2}}{|z|^{2}}\Big]
\leq\frac{C|x|}{x^{2}+y^{2}}\in L_{1}(\mathrm{d} \nu),
\]
and thus $I_{1}/T \to-\int_{\Re(z)\leq0} \Re(1/z)\nu(\mathrm{d} z)$. Fatou's
lemma also shows that if $\mathrm{var}(S_{T})/T$ converges then $\Re(1/z)\in
L_{1}(\mathrm{d} \nu)$.

Next we analyze $I_{2}/T$. Notice that
\[
|1-\mathrm{e}^{zT}|^{2} = (1-\mathrm{e}^{Tx})^{2}+ 4 \mathrm{e}^{Tx} \sin^{2}
(Ty/2),
\]
and since $(1-\mathrm{e}^{Tx})^{2}\leq T|x|$ for $x<0$, we have that
\begin{align*}
\frac{1}{T}\int_{\Re(z) \leq0} \frac{|1-\mathrm{e}^{zT}|^{2}}{|z|^{2}}
\nu(\mathrm{d} z)  & = \frac{1}{T}\int_{\Re(z)\leq0} \frac{4 \mathrm{e}%
^{Tx}\sin^{2}(Ty/2)}{x^{2} + y^{2}} \nu(\mathrm{d} x, \mathrm{d} y) + o(1)\\
& =\frac{1}{T}\int_{\Re(z) \leq0} \frac{4\sin^{2}(Ty/2)}{x^{2} + y^{2}}
\nu(\mathrm{d} x, \mathrm{d} y) + o(1).
\end{align*}
\noindent For $a>0$ write
\[
D_{a}:=\{x+\mathrm{i}y:0\leq-x\leq a,0\leq|y|\leq a\}.
\]
Notice that on $D_{a}^{(c)}$ the integrand is less than $1/2a^{2}$ and
therefore
\[
\int_{\Re(z)\leq0}\frac{4\sin^{2}(Ty/2)}{x^{2}+y^{2}}\nu(\mathrm{d}%
x,\mathrm{d}y)=\int_{D_{a}}\frac{4\sin^{2}(Ty/2)}{x^{2}+y^{2}}\nu
(\mathrm{d}x,\mathrm{d}y)+o(1).
\]
Let
\[
\epsilon_{T}:=\int_{D_{a}}\Big|\frac{\sin(Ty/2)}{Ty}\Big|\times\frac
{|x|\nu(\mathrm{d}x,\mathrm{d}y)}{x^{2}+y^{2}}\rightarrow0,
\]
since the bounded integrand vanishes and $\mu(\mathrm{d}z):=|x|\nu
(\mathrm{d}z)/(x^{2}+y^{2})$ is a finite measure. Let 
{$\delta_{T}
:=\max(\mathrm{e}^{-T},\sqrt{\epsilon_{T}})$}, 
so that $\delta_T>0$ and $\epsilon_{T}/\delta_{T}\rightarrow0$,
and define
\begin{gather*}
U_{a}:=\{x+\mathrm{i}y:0\leq-x\leq|y|\leq a\},\quad D_{a,T}:=\{x+\mathrm{i}%
y:0\leq\delta_{T}|y|\leq-x,|y|\leq a\}\\
U_{a,T}:=\{x+\mathrm{i}y:0\leq-x\leq\delta_{T}|y|,|y|\leq a\}
\end{gather*}
Since on $D_{a,T}$ we have $|y|\leq|x|/\delta_{T}$
\[
\frac{1}{T}\int_{D_{a,T}}\frac{\sin^{2}(Ty/2)}{x^{2}+y^{2}}\mathrm{d}\nu
\leq\frac{1}{\delta_{T}}\int_{D_{a,T}}\frac{|\sin(Ty/2)|}{Ty}\frac{|x|}%
{x^{2}+y^{2}}\mathrm{d}\nu\leq\frac{\epsilon_{T}}{\delta_{T}}\rightarrow0,
\]
and thus since $U_{a,T}=D_{a}-D_{a,T}$ it follows that $y^{2}+x^{2}=y^{2}%
(1+O(\delta_{T}^{2}))$ and
\[
\frac{1}{T}\int_{D_{a}}\frac{4\sin^{2}(Ty/2)}{x^{2}+y^{2}}\nu(\mathrm{d}%
x,\mathrm{d}y)=\frac{1}{T}\int_{U_{a,T}}\frac{\sin^{2}(Ty/2)}{(y/2)^{2}}%
\nu(\mathrm{d}x,\mathrm{d}y)\times(1+o(1))+o(1).
\]
Notice that on $U_{a}\cap D_{a,T}$ we have $0\leq\delta_{T}|y|\leq|x|\leq|y|\leq a$ and thus
\[
\frac{|x|}{x^{2}+y^{2}}\geq\frac{|x|}{x^{2}+y^{2}}\geq\frac{\delta_{T}|y|}%
{2y^{2}}=\frac{\delta_{T}}{2|y|}\Longrightarrow\frac{1}{|y|}\leq\frac
{1}{\delta_{T}}\frac{2|x|}{x^{2}+y^{2}}.
\]
Since $U_{a,T}=U_{a}-U_{a}\cap D_{a,T}$, from the above
\[
\frac{1}{T}\int_{U_{a,T}}\frac{\sin^{2}(Ty/2)}{(y/2)^{2}}\nu(\mathrm{d}%
x,\mathrm{d}y)=\frac{1}{T}\int_{U_{a}}\frac{\sin^{2}(Ty/2)}{(y/2)^{2}}%
\nu(\mathrm{d}x,\mathrm{d}y)+o(1).
\]
Therefore
\begin{multline*}
\frac{1}{T}\int_{U_{a}\cap D_{a,T}}\!\!\!\!\frac{\sin^{2}(Ty/2)}{(y/2)^{2}}%
\nu(\mathrm{d}x,\mathrm{d}y)
\leq C\int_{U_{a}\cap D_{a,T}}\!\!\!\!\frac{|\sin
(Ty/2)|}{T|y|}\times\frac{1}{|y|}\nu(\mathrm{d}x,\mathrm{d}y)\\
\leq\frac{C}{\delta_{T}}\int_{U_{a}\cap D_{a,T}}\!\!\!\!
\frac{|\sin(Ty/2)|}%
{T|y|}\times\frac{|x|}{x^{2}+y^{2}}\nu(\mathrm{d}x,\mathrm{d}y)\leq
C\frac{\epsilon_{T}}{\delta_{T}}\rightarrow0.
\end{multline*}
Finally let
\[
U:=\{x+\mathrm{i}y:0\leq-x\leq|y|\}.
\]
From the above computation, since $a>0$ was arbitrary and all error terms
depending on $a$ vanish as $a\rightarrow\infty$ we have
\begin{multline*}
\lim_{T\rightarrow\infty}\frac{1}{T}\int_{\Re(z)\leq0}\frac{|1-\mathrm{e}%
^{zT}|^{2}}{|z|^{2}}\nu(\mathrm{d}z)=\lim_{T\rightarrow\infty}\frac{1}{T}%
\int_{U_{a}}\frac{\sin^{2}(Ty/2)}{(y/2)^{2}}\nu(\mathrm{d}x,\mathrm{d}y)\\
=\lim_{T\rightarrow\infty}\frac{1}{T}\int_{U}\frac{\sin^{2}(Ty/2)}{(y/2)^{2}%
}\nu(\mathrm{d}x,\mathrm{d}y)=\lim_{T\rightarrow\infty}\frac{1}{T}\int
_{x=0}^{\infty}\frac{\sin^{2}(Tx/2)}{(x/2)^{2}}G(\mathrm{d}x),
\end{multline*}
where $G(x)=\nu(U_{x})$, using similar arguments to the proof of
Theorem~\ref{pr:NSC}. Now we have
\begin{align*}
\frac{1}{T}\int_{x=0}^{\infty}\frac{\sin^{2}(Tx/2)}{(x/2)^{2}}G(\mathrm{d}x)
&  =\frac{1}{T}\int_{x=0}^{\pi}\frac{\sin^{2}(Tx/2)}{\sin^2(x/2)}G(\mathrm{d}%
x)+O(1/T),
\end{align*}
since
\[
\frac{1}{T}\int_{x=0}^{\pi}\sin^{2}(Tx/2)\Big|\frac{1}{(\tfrac{x}{2})^{2}%
}-\frac{1}{\sin^{2}(\tfrac{x}{2})}\Big|G(\mathrm{d}x)\leq\frac{1}{T}\int
_{x=0}^{\pi}\frac{(x/2)^{4}}{\sin^{2}(x/2)(x/2)^{2}}G(\mathrm{d}x)=
O(1/T).
\]
\noindent The result then follows from Theorem A.
\end{proof}

\appendix

\section{Technical lemmas}

\subsection{Standard Tauberian Theorems}

In order to make this paper more self-contained we state the following
classical Tauberian theorems (Theorem 2.3 In \cite{Seneta} or Theorem 1.7.
in \cite{Bing}, due to Feller).

\begin{theorem}
\label{t1} Let $U(x)$ be a monotone non-decreasing function on $[0$,$\infty)$
such that%
\begin{equation*}
w(x)=\int_{0^{-}}^{\infty}e^{-xu}\mathrm{d}U(u)\text{ is finite for all }%
x>0. 
\end{equation*}
Then if $\rho\geq0$ and $L$ is a slowly varying function, then 
\begin{equation*}
w(x)=x^{-\rho}L(x)\text{ as }x\rightarrow\infty\text{ \ iff \ }U(x)=x^{\rho
}L(1/x)/\Gamma(\rho+1)\text{ as }x\rightarrow0^{+}. 
\end{equation*}
\end{theorem}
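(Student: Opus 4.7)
This is Karamata's classical Abelian--Tauberian theorem, whose full proof can be found in Bingham, Goldie and Teugels~\cite{Bing}. My plan is to sketch both implications separately, relying throughout on two standard tools: the uniform convergence theorem for slowly varying functions ($L(\lambda x)/L(x)\to1$ uniformly for $\lambda$ in compact subsets of $(0,\infty)$), and Potter's bounds providing integrable majorants.

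For the Abelian direction $(U\Rightarrow w)$, I would assume $U(y)\sim y^{\rho}L(1/y)/\Gamma(\rho+1)$ as $y\to 0^+$. Since $U$ is non-decreasing with $w(x)<\infty$ for every $x>0$, Fubini's theorem rewrites $w(x)=x\int_0^\infty e^{-xu}U(u)\,\mathrm{d}u$ (absorbing any atom at the origin). The substitution $v=xu$ turns this into $w(x)=\int_0^\infty e^{-v}U(v/x)\,\mathrm{d}v$; slow variation gives
\begin{equation*}
U(v/x)\sim\frac{v^{\rho}L(x)}{x^{\rho}\Gamma(\rho+1)}\quad\text{as } x\to\infty,
\end{equation*}
uniformly on compact sets in $v$, and Potter's bounds furnish an integrable majorant on all of $(0,\infty)$. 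Dominated convergence then yields $w(x)\sim x^{-\rho}L(x)$.

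For the Tauberian direction $(w\Rightarrow U)$, I would assume $w(x)\sim x^{-\rho}L(x)$. For each integer $k\geq 1$, slow variation yields $w(kx)\sim k^{-\rho}x^{-\rho}L(x)$, so by linearity, for any polynomial $p(t)=\sum_{j=1}^m a_j t^j$ with $p(0)=0$,
\begin{equation*}
\int_0^\infty p(e^{-xu})\,\mathrm{d}U(u)=\sum_{j=1}^m a_j w(jx)\sim \frac{x^{-\rho}L(x)}{\Gamma(\rho)}\int_0^\infty p(e^{-v})v^{\rho-1}\,\mathrm{d}v,
\end{equation*}
after invoking the Gamma representation $j^{-\rho}=\Gamma(\rho)^{-1}\int_0^\infty e^{-jv}v^{\rho-1}\mathrm{d}v$. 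Given $c>0$ and $\varepsilon>0$, I sandwich the upper-semicontinuous function $\mathbf{1}_{[e^{-c},1]}$ on $[0,1]$ between two polynomials $p_{\pm}$ vanishing at $0$ with $\int_0^1(p_+-p_-)(t)\,t^{\rho-1}\mathrm{d}t<\varepsilon$. Under the substitution $t=e^{-xu}$, the inequalities $p_-\leq \mathbf{1}_{[e^{-c},1]}\leq p_+$ sandwich $U(c/x)$ between the corresponding polynomial integrals; letting $x\to\infty$ and then $\varepsilon\to 0$ gives $U(c/x)\sim c^{\rho}x^{-\rho}L(x)/\Gamma(\rho+1)$. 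Setting $y=c/x$ and using $L(c/y)\sim L(1/y)$ by slow variation yields the desired asymptotic.

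The main obstacle is the polynomial approximation at the jump $t=e^{-c}$: no polynomial can reproduce a discontinuity, so the approximation is necessarily loose in a neighborhood of that point, and that looseness is absorbed precisely by the monotonicity hypothesis on $U$ (the Tauberian condition), which is what makes the sandwich argument work. A secondary technical point arises when $\rho<1$: the weight $v^{\rho-1}$ is unbounded near $v=0$ (equivalently $t^{\rho-1}$ is unbounded as $t\to 1^-$), which requires a truncation of the integration range near the origin and a separate treatment of the truncated piece using monotonicity before passing to the limit.
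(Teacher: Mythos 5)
The paper does not actually prove Theorem~\ref{t1}: it is quoted verbatim as a classical result (Karamata's Tauberian theorem) with references to Seneta and to Bingham--Goldie--Teugels, precisely so that the text is self-contained at the level of statements. Your sketch is therefore not an alternative to the paper's argument but a reconstruction of the standard proof contained in those references, and as an outline it is essentially correct: the Abelian direction via $w(x)=\int_0^\infty e^{-v}U(v/x)\,\mathrm{d}v$, uniform convergence of $L$ plus Potter bounds, and the Tauberian direction via $w(jx)\sim j^{-\rho}w(x)$, the Gamma representation of $j^{-\rho}$, and a polynomial sandwich of the indicator $\mathbf{1}_{[e^{-c},1]}$ using positivity of $\mathrm{d}U$ is exactly the Feller/Karamata route. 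Two small points would need fixing in a full write-up. First, your approximation criterion in the $t$-variable is stated with the wrong weight: under $t=e^{-v}$ the measure $v^{\rho-1}\mathrm{d}v$ becomes $(\log(1/t))^{\rho-1}t^{-1}\,\mathrm{d}t$, not $t^{\rho-1}\mathrm{d}t$ (your parenthetical ``$t^{\rho-1}$ is unbounded as $t\to1^-$'' betrays this confusion, since $t^{\rho-1}\to1$ there; the singularity you mean is that of $(\log(1/t))^{\rho-1}\sim(1-t)^{\rho-1}$). Your $v$-variable formulas are the correct ones, so this is an inconsistency rather than a conceptual error, but one of the two must be discarded. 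Second, the identity $j^{-\rho}=\Gamma(\rho)^{-1}\int_0^\infty e^{-jv}v^{\rho-1}\,\mathrm{d}v$ requires $\rho>0$, whereas the theorem as stated covers $\rho\ge0$; the case $\rho=0$ (where the limit measure degenerates to a point mass at $v=0$) needs a separate, though easy, treatment. Neither issue is a gap in the mathematics of the classical theorem, but both would have to be repaired before the sketch could replace the citation.
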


We give the monotone Tauberian theorem (Theorem 2.4 in \cite{Seneta} or
Theorem 2.4 in \cite{Bing})

\begin{theorem}
\label{thm:tm} Let $U(x)$ defined and positive on $[A,\infty)$ for some $A>0$
given by 
\begin{equation*}
U(x)=\int_{A}^{x}u(y)\mathrm{d}y,\text{ }
\end{equation*}
where $u(y)$ is ultimately monotone. Then if $\rho\geq0$ and $L$ is a slowly
varying function, then%
\begin{equation*}
U(x)=x^{\rho}L(x)\text{ as }x\rightarrow\infty\text{ implies }u(x)\sim\rho
x^{\rho-1}L(x)\text{ as }x\rightarrow\infty. 
\end{equation*}
If $\rho>0$ then $u(x)$ is regularly varying.
\end{theorem}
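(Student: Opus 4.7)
\noindent \textbf{Proof plan for Theorem~\ref{thm:tm}.}
The plan is to obtain $u(x)\sim \rho\, x^{\rho-1} L(x)$ by a classical Karamata sandwich argument that exploits the monotonicity of $u$ together with the hypothesis $U(x)\sim x^{\rho}L(x)$. I will assume that $u$ is ultimately non-decreasing; the non-increasing case follows by symmetric modifications of the inequalities below, and the $\rho=0$ case is obtained by the same computation, with the conclusion reinterpreted as $u(x)=o\bigl(x^{-1}L(x)\bigr)$.

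Fix $\lambda>1$. The non-decreasing monotonicity of $u$ yields, for all sufficiently large $x$,
\begin{equation*}
u(x)\,(\lambda-1)\,x \;\leq\; \int_{x}^{\lambda x} u(y)\,\mathrm{d}y \;=\; U(\lambda x)-U(x) \;\leq\; u(\lambda x)\,(\lambda-1)\,x.
\end{equation*}
By the uniform convergence theorem for slowly varying functions, $L(\lambda x)/L(x)\to 1$ as $x\to\infty$ uniformly for $\lambda$ in compact subsets of $(0,\infty)$, so the hypothesis $U(x)\sim x^{\rho}L(x)$ gives
\begin{equation*}
\frac{U(\lambda x)-U(x)}{x^{\rho}L(x)} \;\longrightarrow\; \lambda^{\rho}-1 \qquad \text{as } x\to\infty.
\end{equation*}
Dividing the left inequality of the first display by $(\lambda-1)\,x^{\rho}L(x)$ and passing to $\limsup$ gives $\limsup_{x\to\infty} u(x)/\bigl(x^{\rho-1}L(x)\bigr)\leq (\lambda^{\rho}-1)/(\lambda-1)$, and letting $\lambda\downarrow 1$ yields an upper bound of $\rho$.

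For the matching lower bound, fix $\mu\in(0,1)$ and apply the analogous sandwich on $[\mu x,x]$: monotonicity gives $u(x)\,(1-\mu)\,x \geq U(x)-U(\mu x)$, and the slowly varying asymptotics produce
\begin{equation*}
\liminf_{x\to\infty} \frac{u(x)}{x^{\rho-1}L(x)} \;\geq\; \frac{1-\mu^{\rho}}{1-\mu},
\end{equation*}
whose limit as $\mu\uparrow 1$ is $\rho$. Combining the two bounds gives $u(x)\sim \rho\, x^{\rho-1}L(x)$. The regular variation assertion in the $\rho>0$ case is then immediate, because the product $x^{\rho-1}L(x)$ is regularly varying of index $\rho-1$ by definition, and $u$ inherits this property from the asymptotic equivalence. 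The main subtlety is the passage from the integrated asymptotic for $U$ to the difference $U(\lambda x)-U(x)$: one must invoke the uniform convergence theorem for slowly varying $L$ to ensure that the cancellation between $U(\lambda x)$ and $U(x)$ does not destroy the leading-order behaviour when $\lambda$ (or $\mu$) is taken close to $1$ at the end of the argument.
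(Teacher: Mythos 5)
Your sandwich argument is correct: the two-sided bound $u(x)(\lambda-1)x\leq U(\lambda x)-U(x)\leq u(\lambda x)(\lambda-1)x$ together with $\frac{U(\lambda x)-U(x)}{x^{\rho}L(x)}\to\lambda^{\rho}-1$ and the limits $\lambda\downarrow 1$, $\mu\uparrow 1$ is exactly the classical proof of the monotone density theorem. The paper itself offers no proof of this statement --- it is quoted as a known Tauberian theorem from Seneta and from Bingham, Goldie and Teugels --- and your argument is the standard one given in those references, so there is nothing to add beyond noting that for fixed $\lambda$ the pointwise slow-variation property of $L$ already suffices (the uniform convergence theorem you invoke is not actually needed, since the limit in $x$ is taken before $\lambda$ is moved).
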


\subsection{Auxiliary Lemma for Theorem~\protect\ref{thm:MH}}

\begin{lemma}
\label{lem:aux} For $V$ and $H$ as defined in Theorem~\ref{thm:MH}, we have 
\begin{equation*}
\frac{H(1/x)}{2\theta V(x)}\rightarrow1\text{ as }x\rightarrow0^{+}. 
\end{equation*}
In particular if $V(x)$ is slowly varying at $0$, $H(1/x)$ is slowly varying
at $\infty$.
\end{lemma}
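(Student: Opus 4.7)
The plan is to reduce $H(u) := \mathbb{E}[\tau_1^2 I(\tau_1\leq u)]$ to an integral against $\nu$ using the identity $\mathbb{P}(\tau_1 > k) = \theta\int_0^1 y^k(1-y)\nu(\mathrm{d}y)$ already derived in the proof of Theorem~\ref{thm:MH}, and then extract asymptotics by exploiting the slow variation of $V$. Since $\mathbb{P}(\tau_1 = k) = \mathbb{P}(\tau_1 > k-1) - \mathbb{P}(\tau_1 > k) = \theta\int_0^1 y^{k-1}(1-y)^2\nu(\mathrm{d}y)$, Fubini's theorem yields
\begin{equation*}
H(u) \;=\; \theta \int_0^1 (1-y)^2 A_u(y)\,\nu(\mathrm{d}y), \qquad A_u(y) \;:=\; \sum_{k=1}^u k^2 y^{k-1},
\end{equation*}
together with the exact sum $A_\infty(y) = (1+y)/(1-y)^3$ and the crude bound $A_u(y) \leq u^3$.

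Next I would split the integration at the natural threshold $y = 1-1/u$. On the bulk $\{y\leq 1-1/u\}$, an explicit computation of the tail $A_\infty(y) - A_u(y)$ yields the pointwise estimate $(1-y)^2 [A_\infty(y) - A_u(y)] \leq C u^2 (1-y) y^u$, so using $(1+y)/(1-y) = 2/(1-y) - 1$ together with the finiteness of $\nu$ the bulk contribution equals $2\theta V(1/u) + O(1) + O(u^2 \mathbb{P}(\tau_1 > u))$. On the boundary strip $\{y>1-1/u\}$, the bound $(1-y)^2 A_u(y) \leq u$ gives contribution $O(u\,\nu([1-1/u,1]))$. Altogether,
\begin{equation*}
H(u) \;=\; 2\theta V(1/u) \;+\; O\bigl(u\,\nu([1-1/u,1])\bigr) \;+\; O\bigl(u^2\,\mathbb{P}(\tau_1 > u)\bigr).
\end{equation*}

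The main obstacle is showing that both error terms are $o(V(1/u))$ under slow variation of $V$. Pushing $\nu$ forward by $s=1-y$ to a measure $\hat\nu$ near $0$, one has $\nu([1-1/u,1]) = \hat\nu([0,1/u])$ and $V(x) = \int_x^1 \hat\nu(\mathrm{d}s)/s$, so $\hat\nu(\mathrm{d}s) = s\,\mathrm{d}(-V)(s)$. Slow variation forces $sV(s)\to 0$ as $s\to 0^+$ (by Karamata's representation $V$ grows slower than any power), hence integration by parts gives $\hat\nu([0,x]) = \int_0^x [V(s) - V(x)]\,\mathrm{d}s$. The substitution $s = xt$ then rewrites $\hat\nu([0,x])/(xV(x)) = \int_0^1 [V(xt)/V(x) - 1]\,\mathrm{d}t$, which tends to $0$ by dominated convergence: pointwise slow variation gives convergence of the integrand, while Potter's bound $V(xt)/V(x) \leq C t^{-\delta}$ with $\delta<1$ provides an integrable majorant. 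This proves $u\,\nu([1-1/u,1]) = o(V(1/u))$. A dyadic decomposition of $\mathbb{P}(\tau_1 > u) = \theta\int_0^1 (1-s)^u s\,\hat\nu(\mathrm{d}s)$ combined with the same slow-variation bound on $\hat\nu([0,\cdot])$ yields $u^2\mathbb{P}(\tau_1 > u) = o(V(1/u))$. Combining all estimates gives $H(u)\sim 2\theta V(1/u)$, i.e.\ $H(1/x)/(2\theta V(x))\to 1$ as $x\to 0^+$; slow variation of $H(1/x)$ at infinity follows immediately.
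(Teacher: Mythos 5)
Your proof is correct, but it reaches the asymptotic $H(u)\sim 2\theta V(1/u)$ by a genuinely different decomposition than the paper's. The paper starts from the tail identity $H(u)=2\int_0^u x\,\mathbb{P}(\tau_1>x)\,\mathrm{d}x-u^2\mathbb{P}(\tau_1>u)$, replaces $(1-y)^u$ by $\mathrm{e}^{-uy}$ (with an explicit error estimate), and reduces everything to three Laplace-transform-type integrals $I_1,I_2,I_3$, each analyzed by splitting the integration at $Kx$ and letting $K\to\infty$. You instead work directly with the probability mass function, exploit the closed form $\sum_{k\ge1}k^2y^{k-1}=(1+y)/(1-y)^3$ so that $(1-y)^2A_\infty(y)=(1+y)/(1-y)=2/(1-y)-1$ produces the main term $2\theta V(1/u)$ exactly, and split at the natural scale $y=1-1/u$; this avoids the exponential approximation entirely and is, to my eye, cleaner. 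Both arguments ultimately rest on the same key slow-variation fact, namely $\hat\nu([0,x])=\int_0^x[V(s)-V(x)]\,\mathrm{d}s=o(xV(x))$ (the paper cites Theorem~1.6.5 of Bingham--Goldie--Teugels for $r(z)/(zV(z))\to0$, whereas you reprove it self-containedly via Potter's bound and dominated convergence, which is a nice touch).

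Two small caveats. First, your final display $H(u)=2\theta V(1/u)+O(1)+o(V(1/u))$ yields the claimed ratio only if $V(1/u)\to\infty$; indeed the $O(1)$ term is genuinely present (one has $\mathbb{E}\tau_1^2=2\theta V(0^+)-\theta\nu([0,1])$ when $V$ is bounded, so the ratio would not tend to $1$). This is not a defect relative to the paper --- the setting of Theorem~\ref{thm:MH} is $h(n)\to\infty$, hence $V(x)\to\infty$, and the paper's own proof carries the same implicit assumption --- but you should state it. Second, the dyadic-decomposition step establishing $u^2\mathbb{P}(\tau_1>u)=o(V(1/u))$ is asserted rather than carried out; it does go through (split $s$ into scales $2^{j}/u$, use $(1-s)^u\le \mathrm{e}^{-us}$, the bound $\hat\nu([0,x])=o(xV(x))$ on small scales, finiteness of $\hat\nu$ on large ones, and Potter's bound to sum), but since it is one of the two error terms on which the whole estimate hinges, the details should be written out.
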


\noindent\textbf{Proof.} Let $r$ be the measure defined in (\ref{def r}). By
Definition~(\ref{tail tau}) 
\begin{equation*}
\mathbb{P}(\tau_{1}>u)\mathbb{=}2\theta\int_{0}^{1}x^{u}(1-x)\mu (\mathrm{d}%
x)=\theta\int_{0}^{1}(1-y)^{u}yr(\mathrm{d}y). 
\end{equation*}
We show first for any $\delta\in(0,1)$ 
\begin{equation}
\int_{0}^{1}(1-y)^{u}r(\mathrm{d}y)=O(u^{\delta-3})+\int_{0}^{1}e^{-uy}yr(%
\mathrm{d}y).   \label{tail2}
\end{equation}
To prove this, notice that for $u\geq0,$ $0\leq y\leq1,$ for positive $t,m$
and some $C_{m}$ 
\begin{equation*}
|(1-y)^{u}-\mathrm{e}^{-u y}|\leq|1-y-\mathrm{e}^{-y}|u\mathrm{e}%
^{-(u-1)y}\text{ and }\mathrm{e}^{t}\geq 1+t^{m}/C_{m}.\text{ }
\end{equation*}
Then for any $\delta\in(0,1)$ 
\begin{equation*}
\int_{0}^{1}\mathrm{e}^{-uy}uy^{3}r(\mathrm{d}y)
\leq C\int_{0}^{1}\frac{r(\mathrm{d}y)}{u^{3-\delta}y^{1-\delta}}
\leq Cu^{\delta-3}\int_{0}^{1}\frac{r(\mathrm{d}y)}{y^{1-\delta}}, 
\end{equation*}
and after some rearrangement%
\begin{equation*}
\int_{0}^{1}\frac{r(\mathrm{d}y)}{y^{1-\delta}}=C\int_{y=1}^{\infty}\frac{%
V(1/y)}{y^{1+\delta}}\mathrm{d}y<\infty, 
\end{equation*}
since $V(u):=\int_{u}^{1}r(\mathrm{d}y)/y$, is slowly varying as $%
u\rightarrow0^{+}$. Since for $u\geq0$ 
\begin{equation*}
H(u)=\mathbb{E}\tau_{1}^{2}I(\tau_{1}<u)=2\int_{0}^{u}x\mathbb{P}(\tau
_{1}>x)\mathrm{d}x-u^{2}\mathbb{P}(\tau_{1}>u), 
\end{equation*}
by Fubini's theorem and (\ref{tail2}) we derive 
\begin{align*}
H(u) & =O(u^{\delta-1})-\theta u^{2}\int_{0}^{1}e^{-uy}yr(\mathrm{d}%
y)+2\theta\int_{0}^{1}\frac{1-e^{-yu}}{y}r(\mathrm{d}y)-2\theta
u\int_{0}^{1}e^{-yu}r(\mathrm{d}y) \\
& =:O(u^{\delta-1})-\theta I_{1}(u)+2\theta I_{2}(u)-4I_{3}(u).
\end{align*}
Also note that by integration by parts, 
\begin{equation*}
r(z)=\int_{0}^{z}r(\mathrm{d}s)=-\int_{0}^{z}yV(\mathrm{d}%
y)=\int_{0}^{z}[V(y)-V(z)]\mathrm{d}y. 
\end{equation*}
Then $R$ can also be written as 
\begin{equation*}
r(z)=\int_{1/z}^{\infty}\frac{\mathrm{d}V(1/y)}{y}, 
\end{equation*}
and since $V(1/y)$ is slowly varying as $y\rightarrow\infty$ we have by
Theorem~1.6.5 in \cite{Bing} that 
\begin{equation*}
\frac{r(z)}{zV(z)}\rightarrow0,\quad\text{as $z\rightarrow0^{+}$.}
\end{equation*}

Now let $K$ be an arbitrary positive number. Since we will first take limits
as $x\rightarrow 0^{+}$, we can assume that $x$ is small enough so that $Kx<1
$. Therefore splitting the integral $\int_{0}^{1}=\int_{0}^{Kx}+\int_{Kx}^{1}
$ and applying standard analysis, we derive 
\begin{align*}
I_{2}(u)& =\int_{y=0}^{Kx}\frac{1-e^{-yu}}{y}r(\mathrm{d}y)+\int_{y=Kx}^{1}%
\frac{1-e^{-yu}}{y}r(\mathrm{d}y) \\
& =K\frac{r(Kx)}{Kx}+V(Kx)(1+O(\mathrm{e}^{-Kux})).
\end{align*}%
Then for $u=1/x$, arbitrary $K>0$, since for each fixed $K$ 
\begin{equation*}
K\frac{r(Kx)}{KxV(x)}\rightarrow 0\quad \mbox{and}\quad \frac{V(Kx)}{V(x)}%
\rightarrow 1,\text{ as }x\rightarrow 0^{+}
\end{equation*}%
we have 
\begin{equation*}
\limsup_{x\rightarrow 0^{+}}\Big|\frac{I_{2}(1/x)}{V(x)}-1\Big|\leq C\mathrm{%
e}^{-K},
\end{equation*}%
and since $K>0$ is arbitrary $I_{2}(1/x)/V(x)\rightarrow 1$, as $%
x\rightarrow 0^{+}$ and $u=1/x$.

Finally let again $K>0$ be arbitrary. Then 
\begin{equation*}
\sup_{t>K}e^{-t}[1+t+t^{2}]=W_{K},\quad W=\sup_{K>0}W_{K}
\end{equation*}
and notice that $W_{K}\rightarrow0$ as $K\rightarrow\infty$. Then we have
for fixed arbitrary $K>0$, and $x$ small enough so that $Kx<1$, 
\begin{align*}
I_{1}+I_{3} & =\int_{y=0}^{1}e^{-yu}(uy+(uy)^{2})\frac{r(\mathrm{d}y)}{y}%
\leq(1+W)u\int_{y=0}^{Kx}dr(y)+W_{K}\int_{y=Kx}^{1}\frac{r(\mathrm{d}y)}{y}
\\
& =(1+W)\frac{r(Kx)}{x}+W_{K}V(Kx).
\end{align*}
Therefore for $u=1/x$, as above we have 
\begin{equation*}
\limsup_{x\rightarrow0^{+}}\Big|\frac{I_{1}(x^{-1})+I_{3}(x^{-1})}{V(x)}\Big|%
\leq W_{K}, 
\end{equation*}
and since $K$ is arbitrary the claim follows from the fact that 
\begin{equation*}
\frac{I_{1}(x^{-1})+I_{3}(x^{-1})}{V(x)}\rightarrow0,\text{ as }%
x\rightarrow0^{+}. 
\end{equation*}
\hfill$\square$

\end{document}